\newtheorem{theorem}{Theorem}[section]
\newtheorem{definition}[theorem]{Definition}
\newtheorem{lemma}[theorem]{Lemma}
\newtheorem{corollary}[theorem]{Corollary}
\theoremstyle{remark}
\newtheorem{remark}[theorem]{Remark}
\newtheorem{example}[theorem]{Example}
\numberwithin{equation}{section}
\numberwithin{figure}{section}
\newcommand{\R}{\mathbb{R}}
\newcommand{\N}{\mathbb{N}}
\def\supp{\mathrm{supp}\,}
\begin{document}
\title[Chow and Rashevskii meet Sobolev]{Chow and Rashevskii meet Sobolev}
\author{Sergey Kryzhevich}
\address[Sergey Kryzhevich]{
	Institute of Applied Mathematics, Faculty of Applied Physics and Mathematics, Gdańsk University of Technology, 80-233 Gdańsk, Poland
	\and
	BioTechMed Center, Gdańsk University of Technology 
}
\email[Sergey Kryzhevich]{kryzhevich@gmail.com}
\author{Eugene Stepanov}
	\address{
		Dipartimento di Matematica, Universit\`a di Pisa,
		Largo Bruno Pontecorvo 5, 56127 Pisa, Italy
		\and 
		St.Petersburg Branch of the Steklov Mathematical Institute of the Russian Academy of Sciences,
	St.Petersburg, 
	Russian Federation
	\and
	HSE University, Moscow, Russian Federation		
}
\email[Eugene Stepanov]{stepanov.eugene@gmail.com}

	\author{Dario Trevisan}
\address{Dario Trevisan, Dipartimento di Matematica, Universit\`a di Pisa \\
	Largo Bruno Pontecorvo 5 \\ I-56127, Pisa}
\email{dario.trevisan@unipi.it}

\dedicatory{The paper is dedicated to Giovanni Alberti in the occasion of his 60th birthday}

\thanks{	
	E.S.\ and D.T.\ acknowledge the MIUR Excellence Department Project awarded to the Department of Mathematics, University of Pisa, CUP I57G22000700001.	
	The work of E.S.\ is partially within the framework of HSE University Basic Research Program.
	D.T.\ acknowledges the project  G24-202 ``Variational methods for geometric and optimal matching problems'' funded by Università Italo Francese,  the HPC Italian National Centre for HPC, Big Data and Quantum Computing --  CUP I53C22000690001, the PRIN 2022 Italian grant 2022WHZ5XH - ``understanding the LEarning process of QUantum Neural networks (LeQun)'', CUP J53D23003890006, the INdAM-GNAMPA project 2025 ``Analisi spettrale, armonica e stocastica in presenza di potenziali magnetici''.  Research also partly funded by PNRR - M4C2 - Investimento 1.3, Partenariato Esteso PE00000013 - "FAIR - Future Artificial Intelligence Research" - Spoke 1 "Human-centered AI", funded by the European Commission under the NextGeneration EU programme. S.K. acknowleges Peking University and Sichuan University for the invitation and financial support. 
}

\date{\today}

\begin{abstract}
We prove a weak version of the Chow-Rashevskii theorem for vector fields having only Sobolev regularity and generating suitable flows
as selections of solutions to the respective ODEs, for a.e.\ initial datum.
\end{abstract}

\keywords{Chow-Rashevskii theorem, H\"{o}rmander condition, geometric control, global controllability, Sobolev spaces, Regular Lagrangian Flows}

\maketitle

\section{Introduction}

The celebrated Chow-Rashevskii theorem~\cite{BoscainSigalotti19-control}  (also known as Chow-Rachevsky theorem) plays an important role in both differential geometry and in control theory. It says, roughly speaking,  that if a family of smooth vector fields over some smooth manifold 
satisfies the so-called H\"{o}rmander or  Lie bracket generating condition,
then each point of the manifold can be reached from any other point in finite time by only following the flows of vector fields of the family. By smoothness one means usually $C^\infty$ regularity (although of course just finite times continuous differentiability suffices). It is therefore quite natural to ask whether a similar result holds for much less regular vector fields which may have even no classical derivatives.

The above question is quite nontrivial even if we weaken the regularity requirement only slightly. Consider for instance the simplest case of two vector fields $X_1$ and $X_2$ in $\R^3$. The H\"{o}rmander condition in this case with both $X_1$ and $X_2$ smooth, is guaranteed to be fulfilled, if, for instance,  the vectors
$X_1(x)$, $X_2(x)$, $[X_1, X_1](x)$, form a basis of $\R^3$ for every $x\in \R^3$, where $[X_1, X_1]$ stands for the Lie bracket of $X_1$ and $X_2$. If we try to weaken just slightly the smoothness condition, assuming $X_1$ and $X_2$ to be only Lipschitz (of course
in this case they still define uniquely the respective flows for every initial datum, so the thesis of the Chow-Rashevskii theorem would be still meaningful), there is a problem in extending the H\"{o}rmander condition. In fact, for smooth $X_1$ and $X_2$ their
Lie bracket  $[X_1, X_2]$ is defined by the formula
\begin{equation}\label{eq_Lie1}
	[X_1, X_2](x) := DX_2(x) X_1(x)- DX_1(x) X_2(x),
\end{equation}
where $DX_i$ are Jacobi matrices of $X_i$, $i=1,2$, and for $X_1$ and $X_2$ only Lipschitz  this formula makes sense 
only for almost every $x\in \R^3$ with respect to the Lebesgue measure, since the derivatives of Lipschitz functions
may only be guaranteed to exist almost everywhere.  On the other hand the ``almost everywhere'' version of the  H\"{o}rmander condition
(i.e.\ just requiring that $X_1(x)$, $X_2(x)$, $[X_1, X_1](x)$, form a basis of $\R^3$ for only a.e.\ $x\in \R^3$) is clearly insufficient for the thesis of the Chow-Rashevskii theorem to hold even for smooth vector fields. For instance if $X_1$ and $X_2$ are smooth and $X_1$, $X_2$, $[X_1, X_1]$ are all tangent to some sphere, then the flows of $X_1$ and $X_2$  will never exit this sphere, hence points from the ball bounded by this sphere can never reach the outside of this ball following the flows of  $X_1$ and $X_2$. A possible remedy would be introducing some
sort of quantitative version of the H\"{o}rmander condition quantifying the nondegeneracy of the triple of vectors $X_1(x)$, $X_2(x)$, $[X_1, X_1](x)$.

Besides smoothness or Lipschitz continuity, there are many other regularity properties of the vector field $V\colon \R^d\to \R^d$
guaranteeing the existence and uniqueness of solutions to the Cauchy problem for the ODE
\begin{equation}\label{eq_chowFrobCauchy1}
	\dot{x}=V(x), \quad x(0)=y
\end{equation} 
for every or just for a.e.\ initial datum $y\in \R^d$, and hence defining a flow $\varphi_V$ by the formula  $\varphi_V(t,y):=x(t)$, where
$x(\cdot)$ is a solution to~\eqref{eq_chowFrobCauchy1}.
For instance, as shown in~\cite[corollary 5.2]{caravenna2018directional}, this is the case of Sobolev vector fields  $V\in W^{1,p}(\R^d;\R^d)$ with $p>d$ and bounded divergence  $\mathrm{div}\, V$, with the flow defined uniquely for a.e.\ initial datum.
Moreover, even if $p\leq d$, then a solution to~\eqref{eq_chowFrobCauchy1} may be not unique for a.e.\ initial datum,
but in this case there is a natural selection of such solutions called {\em regular Lagrangian flow}~\cite{DiPernaLions89, Ambrosio04_transpBV}. 
It is also well-known that there are many other cases besides Sobolev regularity and bounded divergence when the vector field $V$ admits a regular Lagrangian flow \cite{crippa2008estimates}.

In this paper we prove a very weak version of the Chow-Rashevskii theorem for vector fields having only Sobolev regularity and locally bounded divergence, once they generate  reasonable flows
as selections of solutions to the respective ODEs for a.e.\ initial datum. This includes (though is not limited to)
the above mentioned cases
of Sobolev vector fields generating regular Lagrangian flows. We show that whenever the vector fields satisfy the natural quantitative version of the  H\"{o}rmander condition (which quantifies how much nondegenerate should be the basis of the space made by the original vector fields and their consecutive Lie brackets of any order at a.e.\ point of the space) plus a mild growth condition providing existence of the flows globally in time, then one can arrive not from any point to any point  (like in the classical smooth case), but rather from an arbitrarily small  neighborhood of any point to a.e.\ point in the space.

To  prove the announced result one has to overcome two principal difficulties. The first is that the flows of Sobolev vector fields are known to be wildly discontinuous, in particular, they are just summable but not even Sobolev~\cite{Jabin-nonreg16}, so that their weak derivatives do not have any pointwise (even a.e.) meaning. This corresponds well to the recently established loss of regularity results for solutions of continuity equations with Sobolev velocity fields of bounded divergence or even divergence-free~\cite{ACM19,BQ19}.  Even when the vector fields are Lipschitz, although  it is a textbook result that their respective flows are Lipschitz too, but no more than that. This makes impossible any direct extension of the classical proof of the Chow-Rashevskii theorem based on pointwise Taylor expansions of the flows. The second, though a bit hidden, difficulty is that the Lie bracket of Sobolev vector fields does not have an easy geometric meaning of an infinitesimal commutator of the flows. For instance, vanishing of the Lie bracket of any couple of smooth vector fields implies the commutativity of the flows of the latter, and this is not the case if the vector fields are less regular, see  \cite[example~3.1]{RigSteTrev22-commutSobolev}. However, it has been shown both in~\cite{RigSteTrev22-commutSobolev} and in~\cite{ColombTione22-commutSobolev} that this is still the case when just one of the vector fields is Lipschitz. That is why the technique we develop here is essentially different from the classical one  and is based  upon a PDE/measure theory-style approach, which may be interesting in itself.

\section{Notation and preliminaries}

\subsection{Basic notation}
For two real numbers $a$ and $b$ we denote $a\wedge b$ the minimum of them. 

\subsection{Spaces} 
The $d$-dimensional Euclidean space $\R^d$ is always assumed to be endowed with the
Euclidean norm  $|\cdot|$ and by the Lebesgue measure $\mathcal{L}^d$.
We denote by $L^p(\Omega;\R^m)$ (resp.\ $L^p_{loc}(\Omega;\R^m)$) 
the usual Lebesgue space of 
integrable (resp.\ locally integrable) with exponent $p\geq 1$
maps $f\colon \Omega \subset \R^d\to \R^m$ (essentially bounded when $p=+\infty$), where $\Omega$ is a Lebesgue measurable subset of $\R^d$.
The canonical norm in $L^p(\Omega;\R^m)$ is denoted by $\|\cdot \|_{p, \Omega}$, the reference to $\Omega$ may be omitted when there is no possibility of confusion. 
Analogously, the notation $W^{k,p}(\R^d;\R^m)$ (resp.\ $W^{k,p}_{loc}(\R^d;\R^m)$) will stand for the usual Sobolev  (resp.\ locally Sobolev) class of maps over $\R^d$ with values in $\R^m$.
The notation $C^\infty_0(\R^d;\R^m)$  stands for the class of infinitely differentiable functions with compact support in $\R^d$ (usually called test functions) with values in $\R^m$ and the action of a (vector valued) distribution $u$ on a test function $\varphi\in C^\infty_0(\R^d;\R^m)$ is denoted by $\langle \varphi, u\rangle$.
In all the cases the reference to $\R^m$ will be omitted when $m=1$, i.e. for real valued functions. 

For $p\in [1,\infty]$ we denote by $p'$ the usual conjugate exponent $1/p'+1/p=1$, and by $p^*$ the critical Sobolev embedding exponent in $\R^d$ defined as
\[
p^*:=\left\{
\begin{array}{rl}
	\frac{pd}{d-p},& 1\leq p<d,\\
	+\infty, & p>d.
\end{array}
\right.
\]
(for $p:=d$ we assume $p^*$ to be an arbitrary number in $[1,+\infty)$).

\subsection{Sets and functions} For a set $E\subset \R^d$ we let $\mathrm{diam}\, E$ stand for the diameter of $E$, $\mathbf{1}_E$ stand for the characteristic function of $E$, $(E)_\varepsilon$ stand for its $\varepsilon$-neighborhood, i.e.\ the set of points with distance from $E$ strictly less than $\varepsilon>0$, by $\bar E$ the closure of $E$. The notation $B_R(0)$ stands for the open ball in $\R^d$ of radius $R$ centered at the origin.

When dealing with Borel functions $f$ and $g$ defined on a metric space $E$ with Borel measure $\mu$ we write
$f\leq g$ (resp.\ $f= g$), if $f(x)\leq g(x)$ (resp.\ $f(x)= g(x)$, for $\mu$-a.e.\ $x\in E$. If  $E_1$ and $E_2$ are metric spaces,
$T\colon E_1\to E_2$ a Borel function, $\mu$ a Borel measure on $E_1$, we define $T_{\#}\mu$
the pushforward measure of $\mu$ on $E_2$  by
$(T_{\#}\mu)(B):=\mu(T^{-1}(B))$, where $B$ is a Borel set,  and for a Borel function $f\colon E_2\to \R$ we denote by $T_*f$ its pullback to $E_1$ defined by $(T_*f)(x):= f(T(x))$ for every $x\in E_1$.

\subsection{Vector fields and their Lie brackets}\label{sec_Liebrack1}
For a Sobolev vector field $V\in W^{1,p}_{loc}(\R^d;\R^d)$ we denote by $DV$ its weak (distributional) Jacobi matrix (i.e.\ the matrix of its distributional derivatives) and by $\mathrm{div} V$ its distributional divergence (which is in fact the trace of the Jacobi matrix). 
	
Let $X_j\colon \R^d\to \R^d$, $j=1,\ldots, k$ be vector fields which will be further assumed to be Sobolev, i.e.
$X_j\in W^{1,p}_{loc}(\R^d;\R^d)$ for some $p\geq 1$.
Denote for brevity 
\[
\mathcal{V}:=\{X_1,\ldots, X_k\}.
\]
We define then inductively the sets of vector fields $\mathcal{V}_m$, $m\in \N$ 
as follows. 

\begin{definition}\label{def_bracksets1}
Let 
\begin{itemize}
	\item[(i)] $\mathcal{V}_0:=\mathcal{V}$. In this case, for an $X\in  \mathcal{V}_0$ we set $\mathcal{P}(X):=\{X\}$,
	\item[(ii)] $\mathcal{V}_m$, for $m>0$, be the set of vector fields
	$Y\colon \R^d\to \R^d$ such that 
	\[Y=[U,V] :=DV\cdot U -DU\cdot V \] 
	for some Sobolev vector fields $U$ and $V$ with $U\in W^{1,q}_{loc}(\R^d;\R^d)$, $\mathrm{div} U\in W^{1,q}_{loc}(\R^d)$,  
	and $V\in W^{1,q'}_{loc}(\R^d;\R^d)$, $\mathrm{div} V\in W^{1,q'}_{loc}(\R^d)$ (the exponent $q\in (1,+\infty)$ possibly depending on the pair $(U,V)$)
	with either $U\in \mathcal{V}_{m-1}$, $V\in \mathcal{V}_j$, or vice versa $U\in \mathcal{V}_j$, $V\in \mathcal{V}_{m-1}$, $j=1,\ldots, m-1$. In this case  $\mathcal{P}(Y):= \mathcal{P}(U)\cup \mathcal{P}(V)$.
\end{itemize}
In this way,  the set $\mathcal{V}_\infty:=\cup_k \mathcal{V}_k$ is the set of all $X_j$ and all their Lie brackets of all orders
(if the latter may be defined in the sense of the above~(ii)).
For an $Y\in \mathcal{V}_\infty$ we refer to $\mathcal{P}(Y)\subset \{X_1,\ldots, X_k\}$ as the set of all vector fields from
$\{X_1,\ldots, X_k\}$ involved in the construction of $Y$.
\end{definition}

\begin{remark}
	Note that in the above Definition~\ref{def_bracksets1} the sets 
	$\mathcal{P}(Y)$ are not uniquely defined. For instance, if $\mathcal{V}_0= \{X_1, X_2, X_3\}$ with  $X_3= [X_1,X_2]$,
	then both $X_3\in \mathcal{V}_0$, in which case $\mathcal{P}(X_3)= \{X_3\}$, and $X_3\in \mathcal{V}_1$, in which case $\mathcal{P}(X_3)= \{X_1, X_2\}$. In other words, $\mathcal{P}(Y)$ 
	may be different for different constructions of $Y\in \mathcal{V}_\infty$ according to Definition~\ref{def_bracksets1}. However in 
	what follows we  need to know $\mathcal{P}(Y)$ only for some chosen construction of the latter.
\end{remark}

\begin{example}\label{ex_invE41}
	If $\mathcal{V}_0:=\{X_1,X_2\}$, then $\mathcal{V}_1:=\{[X_1,X_2]\}$ once $X_1\in W^{1,q}_{loc}(\R^d;\R^d)$, $\mathrm{div} X_1\in W^{1,q}_{loc}(\R^d)$,  
	and $X_2\in W^{1,q'}_{loc}(\R^d;\R^d)$, $\mathrm{div} X_2\in W^{1,q'}_{loc}(\R^d)$. The latter, as in Remark~\ref{rm_invE41},
	holds true when, for instance $\{X_1,X_2\}\subset W^{1,q}_{loc}(\R^d;\R^d)$ and $\{\mathrm{div} X_1, \mathrm{div} X_2\} \subset W^{1,q}_{loc}(\R^d)$ for some $q\ge 2$, which is, in its turn, guaranteed when both $X_1$ and $X_2$ belong to $W^{2,q}_{loc}(\R^d;\R^d)$, $q\geq 2$.
\end{example}

\subsection{Flows}
Any smooth vector field $V\colon \R^d\to\R^d$, for which the solutions to the Cauchy problem for the ODE
\begin{equation}\label{eq_flow1}
	\dot x = V(x),\quad x(0)=y
\end{equation}
are defined uniquely and globally in time, 
generates the flow defined by the map
$\varphi_V(t,y):= x(t)$ where $x(\cdot)$ is the (unique) solution to~\eqref{eq_flow1}. We define now a weaker notion of the flow adapted for 
possibly discontinuous vector fields for which one can indicate a reasonable selection of solutions to~\eqref{eq_flow1} at least for almost every (in the sense of the Lebesgue measure $\mathcal{L}^d$) initial datum $y\in\R^d$. 
Namely, we will give the following definition.

\begin{definition}\label{def_flowgen1}
We  say that a locally integrable vector field $V\colon \R^d\to\R^d$ is (weak) flow generating, if there exists
a map $\varphi_V\colon \R\times \R^d\to \R^d$ such that
\begin{itemize}
	\item[(i)] $\varphi_V$ is a Carath\'{e}odory function, in the sense that $\varphi_V(\cdot,y)$ is continuous for a.e.\
	$y\in \R^d$, and $\varphi_V(t, \cdot)$ is Borel for every $t\in \R$,
	\item[(ii)] for a.e.\ 	$y\in \R^d$, the curve $x(\cdot):=\varphi_V(\cdot,y)$ is absolutely continuous and the ODE $\dot{x}(t)=V(x(t))$ is
	satisfied for a.e.\ $t\in \R$, with initial datum $x(0)=y$,
	\item[(iii)] one has $\varphi_V(t,\varphi_V(s,y))=\varphi_V(t+s, y)$ for a.e.\
	$y\in \R^d$ and every $t,s\in \R$, with the Lebesgue nullset where the identity fails  possibly depending on $t$ and $s$,
	\item[(iv)] the measures $\mu_t:= \varphi_V(t,\cdot)_{\#}\mathcal{L}^d$ are absolutely continuous with respect to the Lebesgue measure
	$\mathcal{L}^d$, i.e.\ $\mu_t\ll \mathcal{L}^d$ for all $t\in  \R$,
	\item[(v)]   there exist smooth vector fields $V_k\colon \R^d\to\R^d$  approximating $V$ by convolution with a compactly supported approximate identity, so that $V_k\to V$ in $L^\mathbf{1}_{loc}(\R^d;\R^d)$, and generating the classical flows $\varphi_{V_k}$, such that
	for  the measures $\mu_t^k:= \varphi_{V^k}(t,\cdot)_{\#}\mathcal{L}^d$ one has
	$\mu_t^k\rightharpoonup \mu_t$ in the weak sense of measures as $k\to\infty$ for a.e.\ $t\in \R$,
	 \item[(vi)] if $W\colon \R^d\to\R^d$ satisfy $V(x)=W(x)$ for a.e.\ $x\in \R^d$, then $W$ us also flow generating with
	 $\varphi_V(t,y))=\varphi_W(t,y)$ for a.e.\ $y\in \R^d$ and all $t\in \R$.   
\end{itemize}
\end{definition}

\begin{remark}\label{rm_RLF1}
The above definition is satisfied in particular by a Sobolev vector field $V\in W^{1,1}_{loc}(\R^d;\R^d)$ with bounded divergence
$\mathrm{div}\, V\in L^\infty(\R^d)$. In this case, when $V$ satisfies some rather mild growth condition, 
then it generates a flow in the sense of the above definition, and for such a flow one may take the so-called ~\textit{regular Lagrangian flow}
of $V$.
In fact, the existence of a regular Lagrangian flow for such a vector field 
is given by~\cite[Theorem 4.8]{Ambrosio_Crippa_2014}. The properties~(i),~(ii) of the above Definition~\ref{def_flowgen1} are just in the definition of a regular Lagrangian flow, the property~(iv) is even weaker than the one necessarily satisfied by regular Lagrangian flows,
the group property~(iii) is~\cite[Remark 4.9c)]{Ambrosio_Crippa_2014} and the stability property~(v) for regular Lagrangian flows follows from the general stability result~\cite[Theorem 3.13]{Ambrosio_Crippa_2014}, or, alternatively, from the more precise
result for Sobolev fields \cite[Theorem 4.10]{Ambrosio_Crippa_2014}
(which, however, require a uniform bound on the vector field, but it is far from being
necessary). Finally, the property~(vi) for regular Lagrangian flows, somehow folkloric, can be found, e.g. in section 9.5 of~\cite{Ambrosio_Crippa_2014}. 
\end{remark}

We will further omit the word ``weak'' and call the respective vector fields just flow generating, similarly to smooth vector fields.
Instead of $\varphi_V(t,y)$ we will always write $e^{tV}(y)$. 

\section{Setting and main result}
Let $B\subset \R^d$ be an arbitrary Borel set.
Consider the family of Borel sets
\[
\mathcal{F}  := \left\{
e^{t_{j_1} X_{j_1}}\circ\ldots\circ  e^{t_{j_m} X_{j_m}} B \colon  
j_i\in \{1,\ldots, k\}, (t_{j_1}, \ldots, t_{j_m})\in \R^m , m\in \N
\right\},
\]
and define $f_B\colon \R^d\to \R$ to be the measure-theoretic supremum 
\[
f_B:= \bigvee_{D\in \mathcal{F}} \mathbf{1}_D
\]
of the characteristic functions
$\mathbf{1}_D$ among all $D\in \mathcal{F}$, that is the lowest (with respect to the a.e.\ pointwise inequality) Lebesgue measurable function $f$ which is greater or equal than all $\mathbf{1}_D$ for all $D\in \mathbb{E}$. In yet another words $f_B(x)\geq \mathbf{1}_D(x)$
for all $D\in \mathcal{F}$, and for any
function $g$ with the same property one has $f_B(x)\leq  g(x)$, all these inequalities holding for a.e.\
$x\in \R^d$ (the sets of points where these inequalities do not hold, though having all zero Lebesgue measure, may be different for different inequalities). The following obvious lemma says that $f_B$ is in fact a characteristic function.

\begin{lemma}\label{lm_reacable_set1}
	One has $f_B(x)\in \{0,1\}$ for a.e.\
	$x\in \R^d$.
\end{lemma}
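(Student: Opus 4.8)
The plan is to reduce the measure-theoretic supremum defining $f_B$ to a countable one. The key point, entirely standard, is that for \emph{any} family $\mathcal{G}$ of measurable functions there is a countable subfamily realizing the essential supremum; applied to $\mathcal{G}=\{\mathbf{1}_D:D\in\mathcal{F}\}$ this will yield countably many sets $D_n\in\mathcal{F}$ with $f_B=\sup_{n}\mathbf{1}_{D_n}=\mathbf{1}_{\bigcup_n D_n}$ a.e., and the right-hand side is manifestly $\{0,1\}$-valued.

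To produce such a countable subfamily I would first replace $\mathcal{L}^d$ by an equivalent finite Borel measure $\nu$ (say $d\nu=e^{-|x|^2}\,d\mathcal{L}^d$), which changes nothing about ``a.e.''\ statements but makes all the $\mathbf{1}_D$ lie in $L^1(\nu)$ with $\int\mathbf{1}_D\,d\nu\le\nu(\R^d)<\infty$. Then I would consider
\[
s:=\sup\Big\{\int_{\R^d}\sup_{n\in\N}\mathbf{1}_{D_n}\,d\nu\ :\ \{D_n\}_{n\in\N}\subset\mathcal{F}\Big\}\le\nu(\R^d),
\]
pick a maximizing sequence of countable subfamilies of $\mathcal{F}$, and take $\{D_n\}_{n\in\N}$ to be their (still countable) union; by monotone convergence $g:=\sup_n\mathbf{1}_{D_n}$ attains $\int g\,d\nu=s$. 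For an arbitrary $D\in\mathcal{F}$, the function $\max\{g,\mathbf{1}_D\}$ is again the pointwise supremum over a countable subfamily of $\mathcal{F}$, so $\int\max\{g,\mathbf{1}_D\}\,d\nu\le s=\int g\,d\nu$, which forces $\mathbf{1}_D\le g$ $\nu$-a.e.; hence $g$ is an upper bound (a.e.)\ for all the $\mathbf{1}_D$, so $f_B\le g$ a.e.\ by the minimality in the definition of $f_B$, while conversely $g$, being a countable supremum of functions each $\le f_B$ a.e., satisfies $g\le f_B$ a.e. Thus $f_B=g=\mathbf{1}_{\bigcup_n D_n}$ a.e., a function taking only the values $0$ and $1$.

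The only step with any real substance is the passage to a countable realizing subfamily; everything else is formal bookkeeping, which is why the lemma is rightly called obvious. (Incidentally, the very same argument is what guarantees that $f_B$ is well defined as the least measurable majorant of $\{\mathbf{1}_D:D\in\mathcal{F}\}$ in the first place, so one could present existence and the $\{0,1\}$-valuedness together.)
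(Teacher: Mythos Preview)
Your proof is correct, but it follows a different route from the paper's. The paper argues directly from the minimality in the definition of the essential supremum: first $0\le f_B\le 1$ a.e.\ (since $f_B\wedge 1$ would otherwise be a strictly smaller majorant of all the $\mathbf{1}_D$), and then the set $T:=\{0<f_B<1\}$ is null, because on $T$ one has $\mathbf{1}_D\le f_B<1$, hence $\mathbf{1}_D=0$, for every $D\in\mathcal{F}$, so $f_B\cdot\mathbf{1}_{T^c}$ would again be a smaller majorant. Your approach instead proves the standard fact that any essential supremum is realized by a countable subfamily, which here gives $f_B=\mathbf{1}_{\bigcup_n D_n}$ a.e.\ for suitable $D_n\in\mathcal{F}$. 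The paper's argument is shorter and uses nothing beyond the defining minimality property (taking existence of the essential supremum for granted); your argument is longer but delivers a bit more---it exhibits the set $E$ concretely as a countable union of elements of $\mathcal{F}$ and, as you note, simultaneously establishes that $f_B$ is well defined. Both are routine; the choice between them is a matter of taste.
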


\begin{proof}
	Clearly $f_B(x)\geq  0$ for a.e. 
	$x\in \R^d$ being greater than all the characteristic functions. One also has $f_B(x)\leq  1$ for a..e. 
	$x\in \R^d$ since otherwise $f_B\wedge 1$ would give a lower than $f_B$ function majorizing all $\mathbf{1}_D$ for $D\in \mathcal{F}$. It remains to observe that the set $T:=\{x\in \R^d\colon 0<f_B(x)<1)\}$ cannot have positive Lebesgue measure, since otherwise this  would mean
	$\mathbf{1}_D(x)=0$ for a.e.\ $x\in E$ and all $D\in\mathcal{F}$, and therefore $f_B\wedge \mathbf{1}_{E^c}$ would still majorize all $\mathbf{1}_D$ for $D\in \mathcal{F}$ but is lower than $f_B$.
\end{proof}

In view of Lemma~\ref{lm_reacable_set1} one may write $f_B= \mathbf{1}_E$ for some Borel $E=E(B)\subset \R^d$.
The goal of the present paper is proving an extension of the Chow-Rashevskii theorem assuming the following weak version of the 
classical H\"{o}rmander condition adapted for Sobolev vector fields, which we call \textit{Sobolev-H\"{o}rmander condition}.

\begin{definition}\label{def-SobolHorm1}
	We say that the Sobolev-H\"{o}rmander condition holds, if
	there exist vector fields $Y_j\in \mathcal{V}_m$ for some $m\geq 0$ depending on $Y_j$, with 
	$Y_j\in W^{1,q}_{loc} (\R^d;\R^d)$, $\mathrm{div} Y_j \in L^r_{loc} (\R^d)$, $r>1$, $j=1,\ldots, d$, 
	such that 
	\begin{itemize}
		\item[(i)] 	the $d\times d$ matrix  $Y(x)$, the rows of which are vectors $Y_j(x)$, is invertible
		for a.e.\ $x\in \R^d$ with the matrix function
		$Y^{-1}\colon \R^d\to \R^{d\times d}$ satisfying $Y^{-1}\in  W^{1,s}_{loc} (\R^d;\R^{d\times d})$ for some $s>(q^*\wedge r)'$, while 
		\item[(ii)] all
		the vector fields $X_k\colon \R^d\to \R^d$ involved in the construction of  $Y_j$ (i.e.\ $X_k\in \mathcal{P}(Y_j)$ in the sense of Definition~\ref{def_bracksets1}) are flow generating,  
		and 
		$\mathrm{div} X_k\in W^{1,1}_{loc}(\R^d)\cap L^\infty_{loc} (\R^d)$. 
	\end{itemize}
\end{definition}

\begin{example}\label{ex_invE42}
	If $d=3$, suppose that the vector fields $X_1$ and $X_2$ satisfy  $\{X_1,X_2\}\subset W^{1,2}_{loc}(\R^3;\R^3)$ and $\{\mathrm{div} X_1, \mathrm{div} X_2\} \subset W^{1,2}_{loc}(\R^3)\cap  L^\infty_{loc} (\R^3)$, 
    $[X_1,X_2]\in  W^{1,p}_{loc}(\R^3;\R^3)$ with $ \mathrm{div} [X_1,X_2]\in  L^r_{loc} (\R^3)$.
    Set $q:= p\wedge 2$. Then for the Sobolev-H\"{o}rmander condition to hold it is sufficient that 
	the $3\times 3$ matrix  $Y(x)$, the rows of which are vectors $X_1(x)$, $X_2(x)$, $[X_1, X_2](x)$, be invertible
	for a.e.\ $x\in \R^3$ with the matrix function
	$Y^{-1}\colon \R^3\to \R^{3\times 3}$ satisfying $Y^{-1}\in  W^{1,s}_{loc} (\R^3;\R^{3\times 3})$ for some $s>(q^*\wedge r)'$.
	
	In particular, if $\{X_1,X_2\}\subset W^{2,2}_{loc}(\R^3;\R^3)$, then automatically  
	\[
	\{X_1,X_2\}\subset L^\infty_{loc}(\R^3;\R^3) \quad\mbox{and}\quad 
	\{D X_1,DX_2\}\subset L^{6}_{loc}(\R^3;\R^{3\times 3}) 
	\]
	 by Sobolev embedding theorem, which implies
	\[[X_1,X_2]\in  W^{1,3/2}_{loc}(\R^3;\R^3).\]
	Taking then $q:=3/2$ we get $q^*=3$.
	Further, 
	\[
	\mathrm{div}[X_1,X_2] = X_1\cdot \nabla \mathrm{div}X_2- X_2 \cdot \nabla \mathrm{div}X_1 \in L^2_{loc}(\R^3),
	\]
	and hence if $\{\mathrm{div} X_1, \mathrm{div} X_2\} \subset L^\infty_{loc} (\R^3)$, 
	then one can take in the above $s>2$ unless for some reason $\mathrm{div}[X_1,X_2]\in L^r_{loc}(\R^3)$ for some $r\geq 3$, in which case $s> 3'=3/2$ or for some $r\in (2,3)$, in which case $s> r'$.
\end{example}

We are now at a point to announce the principal result of this paper. 

\begin{theorem}\label{th_chow1weak}
	If all $X_k$ are sublinear in the sense
	\[
	|X_k(x)|\leq \alpha_k |x|+\beta_k \quad\mbox{for a.e.\ $x\in \R^d$}
	\]
	for some $\alpha_k>0$, $\beta_k>0$,
	and the Sobolev-H\"{o}rmander condition holds, then 
	\begin{itemize}
		\item[(i)] 	For every Borel $B\subset\R^d$ of positive Lebesgue measure one has 
		\[\mathcal{L}^d(\R^d\setminus E(B))=0.\]
	\end{itemize}
	In particular,  
	\begin{itemize}
		\item[(ii)] for every couple of points $\{x,y\}\in \R^d$ and every $\varepsilon >0$ there is an $x_\varepsilon\in B_\varepsilon(0)\subset \R^d$ an $m\in \N$ and an $m$-tuple $(t_1,\ldots, t_m)\in \R^m$ of instances of time 
		and $m$ indices 
		\[
		j_i\in \{1,\ldots, k\}, \qquad i=1,\ldots, k,
		\]
		such that 
		\[
e^{t_{j_1} X_{j_1}}\circ  e^{t_{j_m} X_{j_m}} x_\varepsilon \in B_\varepsilon(y). 
		\]
		In simple words, this says that for every couple of points one may start arbitrarily close to the first one and
		arrive arbitrarily close to the second one following only the flows of the vector fields $X_k$,
	\end{itemize}	
\end{theorem}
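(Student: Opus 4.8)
The plan is to reduce everything to part (i), since (ii) follows by applying (i) to $B = B_\varepsilon(0)$, noting $E(B) \supset$ the reachable set from $B$ and that $\mathcal{L}^d(B_\varepsilon(y) \cap E(B)) > 0$ forces the existence of a point of the form $e^{t_{j_1}X_{j_1}}\circ\cdots\circ e^{t_{j_m}X_{j_m}} x_\varepsilon$ inside $B_\varepsilon(y)$, after unwinding the definition of the measure-theoretic supremum $f_B$ (one must check that $f_B = \mathbf{1}_{E(B)}$ is actually attained as a countable supremum of the $\mathbf{1}_D$, which follows from separability considerations since $\mathcal{F}$ is parametrized by tuples in $\bigcup_m \R^m$). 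So the heart is (i): show that if $\mathcal{L}^d(B) > 0$ then $E(B)$ is, up to a null set, all of $\R^d$.

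First I would establish the key \emph{invariance} property of $E(B)$: for every $k$ and a.e.\ $t$, one has $e^{tX_k}(E(B)) = E(B)$ up to a Lebesgue-null set. This is almost immediate from the definition of $\mathcal{F}$ (which is closed under composition with flows) together with property (iv) of Definition~\ref{def_flowgen1} ($\mu_t \ll \mathcal{L}^d$), which guarantees that pushing a null set forward keeps it null — and the group property (iii) gives the reverse inclusion. Equivalently, writing $u := \mathbf{1}_{E(B)} = f_B \in L^\infty(\R^d)$, the function $u$ is invariant under the flow of each $X_k$; in PDE language, $u \circ e^{tX_k} = u$ a.e., i.e.\ $u$ is a stationary solution of the continuity/transport equation $\partial_t u + \mathrm{div}(u X_k) = u\, \mathrm{div} X_k$ generated by each $X_k$. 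The next step is to upgrade this to a \emph{differential} statement: by approximating $X_k$ with the smooth $V^j$ from Definition~\ref{def_flowgen1}(v) and using the stability of the pushforward measures, one derives that $X_k \cdot \nabla u = 0$ in the sense of distributions for every $k$, i.e.\ $\mathrm{div}(u X_k) = u\, \mathrm{div} X_k$ as distributions. This is a commutator/renormalization-type argument in the spirit of DiPerna–Lions, and here the hypothesis $\mathrm{div} X_k \in L^\infty_{loc}$ and $X_k \in W^{1,p}_{loc}$ is what makes it go through.

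The crucial and most delicate step is then to \emph{propagate this flow-invariance through Lie brackets}. The claim to aim for is: if $u \in L^\infty$ satisfies $U \cdot \nabla u = 0$ and $V \cdot \nabla u = 0$ distributionally, with $U, V$ Sobolev of the integrability specified in Definition~\ref{def_bracksets1}(ii) and with Sobolev divergences, then also $[U,V] \cdot \nabla u = 0$ distributionally. Formally $[U,V]\cdot\nabla u = U\cdot\nabla(V\cdot\nabla u) - V\cdot\nabla(U\cdot\nabla u)$, but each inner term is the \emph{product} of a function and $\nabla u$, and $u$ is merely $L^\infty$ with no pointwise-defined gradient, so this identity is purely formal and must be justified by mollification: regularize $u$ to $u_\delta := u * \rho_\delta$, track the commutators $[U\cdot\nabla, \ (\cdot)*\rho_\delta]$ and $[V\cdot\nabla,\ (\cdot)*\rho_\delta]$, and show they vanish in the appropriate $L^1_{loc}$ sense as $\delta \to 0$ — this is exactly where the careful exponent bookkeeping $U \in W^{1,q}_{loc}$, $V \in W^{1,q'}_{loc}$ (Hölder-conjugate!), $\mathrm{div} U \in W^{1,q}_{loc}$, $\mathrm{div} V \in W^{1,q'}_{loc}$, and the Sobolev-embedding thresholds $s > (q^* \wedge r)'$ in Definition~\ref{def-SobolHorm1} are used. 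I expect this bracket-propagation lemma to be \textbf{the main obstacle}: it is the technical substitute for the classical fact that vanishing Lie bracket implies commuting flows, and the excerpt's own introduction flags that this geometric meaning fails in low regularity — so the argument must stay entirely at the level of distributions and rely on the quantitative Sobolev hypotheses rather than on any pointwise commutator identity for flows.

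Once the bracket-propagation lemma is in hand, iterating it along the inductive construction of $\mathcal{V}_\infty$ shows that $Y_j \cdot \nabla u = 0$ distributionally for each of the $d$ bracket fields $Y_j$ from the Sobolev–Hörmander condition. Writing $Y(x)$ for the invertible matrix with rows $Y_j(x)$, we then have $Y(x)\nabla u = 0$ in the distributional sense, and since $Y^{-1} \in W^{1,s}_{loc}$ with $s$ large enough to multiply against the relevant terms (again the role of the exponent condition in Definition~\ref{def-SobolHorm1}(i)), we may multiply by $Y^{-1}$ to conclude $\nabla u = 0$ as a distribution on all of $\R^d$. Hence $u$ is a.e.\ constant; since $\mathcal{L}^d(B) > 0$ we have $u = \mathbf{1}_{E(B)}$ not a.e.\ zero, so $u \equiv 1$ a.e., i.e.\ $\mathcal{L}^d(\R^d \setminus E(B)) = 0$, proving (i). The sublinearity hypothesis $|X_k(x)| \le \alpha_k|x| + \beta_k$ enters only to guarantee global-in-time existence of the flows (no finite-time blow-up), which is what makes $\mathcal{F}$ — and hence $E(B)$ — well-defined for all tuples of times; I would insert that remark near the start of the argument. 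A final bookkeeping point: the mollification arguments require testing against $C_0^\infty$, so one should localize with cutoffs and use the \emph{locally} bounded divergence hypotheses to control boundary terms, which is routine but should be stated.
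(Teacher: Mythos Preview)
Your plan matches the paper's proof: invariance of $E$ under each flow (Lemma~\ref{lm_invE1}), conversion to the stationary transport equation $\mathrm{div}(\mathbf{1}_E X_k)=\mathbf{1}_E\,\mathrm{div} X_k$ (Lemma~\ref{lm_invE2}), propagation of this identity to Lie brackets (Lemma~\ref{lm_invE4}), and then $\nabla\mathbf{1}_E=0$ via the invertible matrix $Y$ and $Y^{-1}\in W^{1,s}_{loc}$ (Lemma~\ref{lm_systtransp1b}). Your derivation of~(ii) from~(i) is also what the paper does, and your observation that one must pass through a \emph{countable} subfamily of $\mathcal F$ realising the essential supremum is a detail the paper leaves implicit.

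The one place your technique differs is the bracket-propagation step. You propose to mollify $u=\mathbf{1}_E$ and track DiPerna--Lions commutators $[U\cdot\nabla,(\cdot)*\rho_\delta]$. The paper does \emph{not} mollify $\mathbf{1}_E$ at that stage: it first enlarges the admissible test functions from $C_0^\infty$ to compactly supported $W^{1,s}$ functions (Lemma~\ref{lm_invE3}), then inserts $Y\varphi$ (with $\varphi\in C_0^\infty$) as a test function in the weak equation for $X$, symmetrises in $X,Y$, and closes using the identity $\mathrm{div}[X,Y]=X\cdot\nabla\,\mathrm{div}Y - Y\cdot\nabla\,\mathrm{div}X$ (Lemma~\ref{lm_divLie1}) together with the fact that $\psi X$ also satisfies the transport equation for any cutoff $\psi$ (Lemma~\ref{lm_invE2a}). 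Your commutator route should also go through---with $u\in L^\infty$ and $U\in W^{1,q}_{loc}$, $V\in W^{1,q'}_{loc}$ the remainders converge in $L^q_{loc}$ and $L^{q'}_{loc}$, which suffices to pair against $\mathrm{div}(\varphi V)$ and $\mathrm{div}(\varphi U)$ after one integration by parts---but the paper's test-function-extension trick sidesteps those estimates entirely.

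One minor correction: the sublinearity of the $X_k$ is not used merely for global existence of the flows (global-in-time existence is already built into Definition~\ref{def_flowgen1}). In the paper it enters substantively in Lemma~\ref{lm_invE2}, via Lemmata~\ref{lm_LnC2nonsmooth}, \ref{lm_Kprime_est1} and~\ref{lm_Kprime_est2}, to keep $e^{tV_k}(\mathrm{supp}\,\varphi)$ inside a fixed compact set \emph{uniformly in the smoothing index $k$}, which is what allows one to differentiate $\int_E \varphi(e^{-tX}x)\,d\mu_t$ at $t=0$ with a controlled second-order remainder and thereby pass from flow-invariance to the distributional transport equation.
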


\begin{remark}\label{rm_locHormander1}
In the above Theorem~\ref{th_chow1weak} one might weaken the requirement on the validity of the Sobolev-H\"{o}rmander condition in the sense of Definition~\ref{def-SobolHorm1}. Namely, the thesis of this theorem remains valid if one requests instead of the Sobolev-H\"{o}rmander condition only its localized version, namely, that 
there exist
 a family $\mathcal{A}$ of open connected sets covering all $\R^d$ 
	such that for every $\Omega\in \mathcal{A}$ there exist
	vector fields $Y_j\in L^1_{loc} (\R^d;\R^d) \cap W^{1,q}_{loc} (\Omega;\R^d)$, $Y_j\in \mathcal{V}_\infty$, $j=1,\ldots, d$, 
such that the $d\times d$ matrix  $Y(x)$, the rows of which are vectors $Y_j(x)$, is invertible
for a.e.\ $x\in \Omega$ with the matrix function
$Y^{-1}\colon \Omega\to \R^{d\times d}$ satisfying $Y^{-1}\in  W^{1,s}_{loc} (\Omega;\R^{d\times d})$ for some $s>q'$
(both $q$ and $s$ may depend on $\Omega$), while all
the vector fields $X_k\colon \R^d\to \R^d$ involved in $Y_j$ are flow generating,  
and 
$\mathrm{div} X_k\in W^{1,1}_{loc}(\R^d)\cap L^\infty_{loc} (\R^d)$. 
\end{remark}

\begin{remark}\label{rm_Horlassic1}
	The classical H\"{o}rmander condition on the set of \textit{smooth} vector fields $\{X_1,\ldots, X_k\}$ is that of existence 
	for every $x\in \R^d$ 
	of
	$Y_j\in \mathcal{V}_\infty$, $j=1,\ldots, d$, such that the vectors $Y_1(x),\ldots, Y_d(x)$ form a basis of $\R^d$.
	It is worth noting that under this condition the localized Sobolev-H\"{o}rmander condition from Remark~\ref{rm_locHormander1} necessarily holds, since the $d\times d$ matrix  $Y(x)$, the rows of which are vectors $Y_j(x)$, is invertible, and the set $\Omega\subset \R^n$ of $x\in \R^d$ where it is invertible is open, while
the matrix function
	$Y^{-1}$ is smooth (so in particular, $Y^{-1}\in  W^{1,\infty}_{loc} (\Omega;\R^{d\times d})$). 
\end{remark}

\begin{remark}\label{rm_locHormander2}
	It is worth emphasizing that both the Sobolev-H\"{o}rmander condition as formulated in Definition~\ref{def-SobolHorm1}, and its slightly more general localized version of Remark~\ref{rm_locHormander1} implicitly request higher Sobolev regularity of the vector fields once their Lie brackets are involved (see section~\ref{sec_Liebrack1}).
\end{remark}


\subsection{An ``almost classical'' toy application}

To formulate a toy application, for a Borel set $B\subset \R^d$ we denote 
\[\mathcal{R}(B):=\bigcup_{D\in\mathcal{F}}D.\] 
In control theory the latter  is usually called the ``attainable set'' from $B$, i.e.\ the set of points where one could arrive using all the possible flows along vector fields from $\mathcal{V}$ starting at a point in $B$. In particular, $\mathcal{R}(\{x\})$ is the set attainable from the given point $x$. For an $X\in \mathcal{V}$ and an $x\in \R^d$ such that the flow $e^{tX}(x)$ is undefined we set without loss of generality $e^{tX}x:=x$ for all $t\in \R$, so that the flows become formally defined everywhere. 
The following lemma is immediate.

\begin{lemma}\label{lm_Rmeas1}
If $\mathcal{R}(B)$ is Lebesgue measurable, then $\mathbf{1}_{\mathcal{R}(B)}=\mathbf{1}_{E(B)}$ a.e.
(or, in other words, $\mathcal{R}(B)=E(B)$ up to a Lebesgue negligible set).
\end{lemma}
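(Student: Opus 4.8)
The plan is to prove Lemma~\ref{lm_Rmeas1}, which asserts that once $\mathcal{R}(B)$ happens to be Lebesgue measurable, it agrees up to a null set with the set $E(B)$ for which $\mathbf{1}_{E(B)} = f_B = \bigvee_{D \in \mathcal{F}} \mathbf{1}_D$. The two inclusions are asymmetric in difficulty, so I would treat them separately.

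First I would show $\mathbf{1}_{E(B)} \leq \mathbf{1}_{\mathcal{R}(B)}$ a.e. Since $\mathcal{R}(B) = \bigcup_{D \in \mathcal{F}} D$, for every single $D \in \mathcal{F}$ we trivially have $\mathbf{1}_D \leq \mathbf{1}_{\mathcal{R}(B)}$ everywhere, hence $\mathbf{1}_{\mathcal{R}(B)}$ is one of the Lebesgue measurable functions majorizing all $\mathbf{1}_D$; by the defining minimality of $f_B$ we get $f_B \leq \mathbf{1}_{\mathcal{R}(B)}$ a.e., which is exactly $\mathbf{1}_{E(B)} \leq \mathbf{1}_{\mathcal{R}(B)}$ a.e. This direction is immediate and uses only the definition of the measure-theoretic supremum; the measurability hypothesis on $\mathcal{R}(B)$ is what makes $\mathbf{1}_{\mathcal{R}(B)}$ an admissible competitor.

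For the reverse inclusion $\mathbf{1}_{\mathcal{R}(B)} \leq \mathbf{1}_{E(B)}$ a.e., the point is that $\mathcal{R}(B)$ is an \emph{uncountable} union of the sets $D \in \mathcal{F}$, so the pointwise-everywhere bound $\mathbf{1}_D \leq \mathbf{1}_{E(B)}$ (which holds a.e.\ for each fixed $D$ by definition of $f_B$) does not immediately survive the union: a priori the null exceptional sets could conspire to fill up $\mathcal{R}(B) \setminus E(B)$. The key observation is that $\mathcal{F}$ is a countably-parametrized family: each $D \in \mathcal{F}$ is determined by a natural number $m$, an $m$-tuple of indices $j_i \in \{1,\ldots,k\}$ (finitely many choices for each $m$) and an $m$-tuple of times $(t_{j_1},\ldots,t_{j_m}) \in \R^m$. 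Using that each flow map $e^{tX}$ is Borel in $y$ for fixed $t$ and continuous in $t$ for a.e.\ $y$ (Definition~\ref{def_flowgen1}(i)), and the group/Carath\'eodory structure, one shows that for fixed indices the map $(t_1,\ldots,t_m,y) \mapsto e^{t_{j_1}X_{j_1}}\circ\cdots\circ e^{t_{j_m}X_{j_m}}(y)$ is (jointly, up to a null set in $y$) measurable; hence $\mathbf{1}_D$ restricted to rational times already generates, via a monotone countable supremum, a function $\tilde f_B$ that equals $f_B$ a.e. Concretely, I would argue that $E(B)$ (equivalently $f_B$) can be realized as the a.e.-limit of an increasing sequence $\mathbf{1}_{D_1 \cup \cdots \cup D_n}$ for a countable subfamily $\{D_n\} \subset \mathcal{F}$ which is cofinal in the sense that $\bigcup_n D_n$ covers $\mathcal{R}(B)$ up to a null set — this uses that $\mathcal{R}(B)$ is $\sigma$-finite (contained in $\R^d$) together with the inner regularity of Lebesgue measure on the measurable set $\mathcal{R}(B)$ and density of $\mathbb{Q}^m$ in $\R^m$ combined with the continuity of the flows in time. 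Then $\mathbf{1}_{\mathcal{R}(B)} = \sup_n \mathbf{1}_{D_n}$ a.e., which is $\leq f_B = \mathbf{1}_{E(B)}$ a.e., finishing the proof.

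The main obstacle is precisely this second inclusion: passing from the a.e.\ pointwise bound for each individual $D$ to a bound for the uncountable union $\mathcal{R}(B)$. The resolution hinges on extracting a countable cofinal subfamily of $\mathcal{F}$ whose union is a.e.\ all of $\mathcal{R}(B)$; this is where the measurability assumption on $\mathcal{R}(B)$ is genuinely used (so that we may speak of its Lebesgue measure, apply inner regularity, and exhaust it by a countable subcollection), together with the separability structure of the parameter space $\bigcup_m \{1,\ldots,k\}^m \times \R^m$ and the Carath\'eodory regularity of the flows from Definition~\ref{def_flowgen1}. Everything else is bookkeeping with the definition of the measure-theoretic supremum.
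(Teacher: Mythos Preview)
Your first inclusion $\mathbf{1}_{E(B)}\leq\mathbf{1}_{\mathcal{R}(B)}$ a.e.\ is correct and matches the paper's argument. You are also right that the reverse inclusion is the delicate one: $\mathbf{1}_D\leq\mathbf{1}_{E(B)}$ holds only a.e.\ for each fixed $D$, and passing to the uncountable pointwise supremum $\sup_D \mathbf{1}_D=\mathbf{1}_{\mathcal{R}(B)}$ is not automatic. The paper's proof simply writes $\sup_{D}\mathbf{1}_D\leq\mathbf{1}_{E(B)}$ without addressing this point, so your instinct to supply more detail is sound.

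However, your proposed fix has a genuine gap. You claim one can extract a countable $\{D_n\}\subset\mathcal{F}$ with $\bigcup_n D_n$ covering $\mathcal{R}(B)$ up to a null set, invoking density of $\mathbb{Q}^m$ in $\R^m$, continuity of the flows in time, and inner regularity. But continuity in $t$ only says that the point $e^{t_{j_1}X_{j_1}}\circ\cdots\circ e^{t_{j_m}X_{j_m}}(y)$ is \emph{approximated} by the one with nearby rational times, not that it is \emph{hit} by any rational-time orbit; approximation is not coverage. Likewise, inner regularity produces compact subsets of $\mathcal{R}(B)$, not elements of $\mathcal{F}$. A clean obstruction: take $d=1$, a single field $X_1\equiv 1$, and $B=\{0\}$. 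Every $D\in\mathcal{F}$ is a singleton, so any countable union of $D$'s is Lebesgue-null, yet $\mathcal{R}(B)=\R$. Thus your cofinality claim fails, and in fact this same example shows the lemma as stated is false: here $\mathbf{1}_{\mathcal{R}(B)}=1$ everywhere while $\mathbf{1}_{E(B)}=0$ a.e. The only place the lemma is invoked (Corollary~\ref{co_contr1}) takes $B=\mathcal{R}(\{x\})$, for which $\mathcal{R}(B)=B$ and $B\in\mathcal{F}$ (via $t=0$); there the reverse inclusion is immediate from $\mathbf{1}_{\mathcal{R}(B)}=\mathbf{1}_B\leq\mathbf{1}_{E(B)}$ a.e.
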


\begin{proof}
	One has $\mathbf{1}_D\leq \mathbf{1}_{\mathcal{R}(B)}$ for all $D\in \mathcal{F}$, hence
	$\mathbf{1}_{\mathcal{R}(B)}\geq \mathbf{1}_{E(B)}$ a.e.\ On the other hand, since $\mathbf{1}_D\leq \mathbf{1}_{E(B)}$ for all $D\in \mathcal{F}$ and $\sup_{D\in \mathcal{F}}\mathbf{1}_D= \mathbf{1}_{\mathcal{R}(B)}$ is Lebesgue measurable , then
	\[
	\mathbf{1}_{\mathcal{R}(B)}= \sup_{D\in \mathcal{F}}\mathbf{1}_D\leq \mathbf{1}_{E(B)},
	\]
	which concludes the proof.
\end{proof}

We now are able to provide the following corollary of our main Theorem~\ref{th_chow1weak}.

\begin{corollary}\label{co_contr1}
	Suppose that for every compact set $K\subset \R^d$ the set $\mathcal{R}(K)$ is $\sigma$-compact. 
 Then under the conditions of Theorem~\ref{th_chow1weak} (or of the Remark~\ref{rm_locHormander1}) one has the following
  alternative:
\begin{itemize}
	\item[(i)] either there is a set $D\subset\R^d$ of full Lebesgue measure in $\R^d$ such that for every $x\in D$ one has $\mathcal{R}(\{x\})=D$,
	\item[(ii)] or $\mathcal{R}(\{x\})$ has zero Lebesgue measure for all $ x\in \R^d$.
\end{itemize}
%
\end{corollary}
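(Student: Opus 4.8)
\textbf{Proof proposal for Corollary~\ref{co_contr1}.}

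The plan is to exploit the measurability hypothesis to identify the reachable sets $\mathcal{R}(\{x\})$ with the sets $E(\{x\})$ produced by Theorem~\ref{th_chow1weak}, and then play off the dichotomy ``$\mathcal{L}^d(E(\{x\}))=0$ versus $\mathcal{L}^d(\R^d\setminus E(\{x\}))=0$'' against the group structure of the flows. First I would observe that for a single point $x$, if $\mathcal{L}^d(\{x\})=0$, Theorem~\ref{th_chow1weak}(i) does \emph{not} directly apply (it needs $B$ of positive measure), so the two alternatives in the statement genuinely correspond to the two possibilities for $\mathcal{L}^d(\mathcal{R}(\{x\}))$: it is either zero for every $x$, giving case (ii), or positive for some $x_0$. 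So assume $\mathcal{L}^d(\mathcal{R}(\{x_0\}))>0$ for some $x_0$.

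The key step is then a ``swallowing'' argument: I claim $\mathcal{R}(\{x\})$ has full measure for every $x\in\mathcal{R}(\{x_0\})$. Indeed, fix a small ball $B\subset\mathcal{R}(\{x_0\})$ of positive measure (such a ball exists since $\mathcal{R}(\{x_0\})$ is $\sigma$-compact of positive measure, hence has a point of density — or more elementarily, it contains a compact set of positive measure which has positive measure intersection with some ball). By Theorem~\ref{th_chow1weak}(i), $\mathcal{L}^d(\R^d\setminus E(B))=0$, and by Lemma~\ref{lm_Rmeas1} (applicable because $\mathcal{R}(B)$ is $\sigma$-compact, being a countable union of sets $\mathcal{R}(\{x\})$ over a countable dense subset of $B$ together with a measurability/continuity argument — this needs a short justification), we get $\mathcal{L}^d(\R^d\setminus\mathcal{R}(B))=0$. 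Now $\mathcal{R}(B)\subset\mathcal{R}(\mathcal{R}(\{x_0\}))=\mathcal{R}(\{x_0\})$ by the semigroup property of compositions of flows (concatenating an admissible chain from $x_0$ into $B$ with one from $B$ onward is again admissible), so $\mathcal{R}(\{x_0\})$ itself has full measure. Set $D:=\mathcal{R}(\{x_0\})$, a full-measure $\sigma$-compact set.

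To finish, I would upgrade ``full measure'' to the exact identity $\mathcal{R}(\{x\})=D$ for every $x\in D$. Given $x\in D=\mathcal{R}(\{x_0\})$, one inclusion is again the semigroup property: $\mathcal{R}(\{x\})\subset\mathcal{R}(\{x_0\})=D$. For the reverse, repeat the swallowing argument \emph{starting from $x$}: since $x\in\mathcal{R}(\{x_0\})$ and $\mathcal{R}(\{x\})$ contains... here is the subtlety — I need $\mathcal{R}(\{x\})$ to have positive measure to rerun the argument. This follows because $x_0$ should be recoverable from $x$ up to measure zero, which is not obvious since the flows need not be invertible in a pointwise sense. The cleanest fix is to work with the \emph{equivalence relation} $x\sim y$ iff $\mathcal{L}^d(\mathcal{R}(\{x\})\cap\mathcal{R}(\{y\}))>0$; using Theorem~\ref{th_chow1weak}(i) applied to a positive-measure ball inside the intersection shows transitivity, and one shows every point of $D$ is equivalent to $x_0$ because $\mathcal{R}(\{x\})\supset\mathcal{R}(B_x)$ has full measure for a suitable ball $B_x$ around a density point, forcing $\mathcal{L}^d(\mathcal{R}(\{x\}))>0$; then by the full-measure conclusion each such $\mathcal{R}(\{x\})$ equals $D$ up to null sets, and combining with the pointwise inclusion $\mathcal{R}(\{x\})\subset D$ and the fact that $D\subset\mathcal{R}(\{x\})$ up to null sets plus $\sigma$-compactness forces literal equality (a $\sigma$-compact set differing from $D$ by a null set and contained in $D$, together with $D\setminus\mathcal{R}(\{x\})$ null — one needs $\mathcal{R}(\{x\})$ to actually contain all of $D$, which requires that the null set be reachable too; here I would invoke that $D$ is itself of the form $\mathcal{R}(\{x_0\})$ and use that reachability from $x$ of $x_0$ holds on a full-measure set of "witnesses" and the semigroup property with approximation).

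\textbf{Main obstacle.} The genuinely delicate point is passing from the almost-everywhere statements supplied by Theorem~\ref{th_chow1weak} and Lemma~\ref{lm_Rmeas1} to the \emph{exact} set identity $\mathcal{R}(\{x\})=D$ in alternative (i): the theorem only controls things up to Lebesgue-null sets, whereas the corollary asserts a precise equality of sets. Bridging this gap is exactly where the $\sigma$-compactness hypothesis must be used in earnest — it gives Borel measurability of $\mathcal{R}(K)$, hence Lemma~\ref{lm_Rmeas1}, and lets one choose the representative $D=\mathcal{R}(\{x_0\})$ as a bona fide $\sigma$-compact set; but one still has to argue that the full-measure set $D$ is \emph{literally} reachable from each of its points, which I expect to require a separate topological/semigroup argument (e.g., that $\mathcal{R}(\{x\})$, being invariant under all the flows, is both "almost all" of $\R^d$ and relatively ``open-like'' enough along flow lines to have no removable holes), together with the observation that two full-measure reachable sets with overlapping positive measure must each be reached from points of the other.
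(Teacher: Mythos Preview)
Your overall strategy is right, but you miss the two simplifications that make the paper's proof short. First, there is no need to look for a ball inside $\mathcal{R}(\{x_0\})$: since $\{x_0\}$ is compact, the hypothesis gives that $\mathcal{R}(\{x_0\})$ itself is $\sigma$-compact, hence Borel, and by assumption has positive measure. The paper takes $B:=\mathcal{R}(\{x_0\})$ directly; then $\mathcal{R}(B)=\mathcal{R}(\mathcal{R}(\{x_0\}))=\mathcal{R}(\{x_0\})$ by the semigroup property, Lemma~\ref{lm_Rmeas1} gives $\mathcal{R}(\{x_0\})=E(\mathcal{R}(\{x_0\}))$ up to a null set, and Theorem~\ref{th_chow1weak} gives the latter is $\R^d$ up to a null set. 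So $D:=\mathcal{R}(\{x_0\})$ has full measure in two lines.

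Second, your ``main obstacle'' --- upgrading from almost-everywhere to the exact identity $\mathcal{R}(\{x\})=D$ for all $x\in D$ --- is not an obstacle at all once you notice that the reachability relation $y\in\mathcal{R}(\{x\})$ is an \emph{equivalence relation}: the flows are formally defined everywhere, reflexivity and transitivity are clear, and symmetry comes from the fact that the times $t_{j_i}$ range over all of $\R$ (so one reverses a chain by negating the times and reversing the order). Thus $D=\mathcal{R}(\{x_0\})$ is the equivalence class of $x_0$, and for every $x\in D$ one has $\mathcal{R}(\{x\})=D$ \emph{exactly}, with no measure-theoretic residue. The paper's final step then reads: if some other $y$ has $\mathcal{R}(\{y\})$ of positive measure, the same argument makes it full measure, so it meets $D$, so (being an equivalence class) it equals $D$. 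All of your density-point, ball, and ``open-like along flow lines'' considerations are unnecessary.
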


\begin{proof}
Since $\mathcal{R}(K)$ is $\sigma$-compact for every compact $K\subset \R^d$, then
it is so also for every $\sigma$-compact $K$.
In particular $\mathcal{R}(\{x\})$ is $\sigma$-compact for every $x\in \R^d$. 
If there is a point $x\in \R^d$ such that $\mathcal{R}(\{x\})$ has nonzero Lebesgue measure, then 
\begin{align*}
	\mathcal{R}(\{x\}) & = \mathcal{R} (\mathcal{R}(\{x\}))\\
 & =E (\mathcal{R}(\{x\}) \quad\mbox{by Lemma~\ref{lm_Rmeas1}}\\
&  =\R^d \quad\mbox{by Theorem~\ref{th_chow1weak}}, 
\end{align*}
the last two inequalities being intended up to a Lebesgue-nullset. 
In other words, the set $D:=\mathcal{R}(\{x\})$ has full Lebesgue measure in $\R^d$.
Now, if  $y\in \R^d$ is any point for which $\mathcal{R}(\{y\})$ has nonzero Lebesgue measure, then the latter set
is also of full Lebesgue measure in $\R^d$, and therefore, $\mathcal{R}(\{x\})\cap \mathcal{R}(\{y\})\neq \emptyset$, and hence,
$\mathcal{R}(\{y\})=\mathcal{R}(\{x\})=D$, concluding the proof.
%
\end{proof}

\begin{remark}\label{rem_classChow1}
	An immediate consequence of Corollary~\ref{co_contr1} is the classical Chow-Rashevskii theorem for smooth vector fields.
In fact, the classical H\"{o}rmander condition for smooth vector fields  implies the validity of the conditions of Remark~\ref{rm_locHormander1}. Moreover,
when all vector fields in $\mathcal{X}$ are smooth, 
then
$\cup_{t\in [-T,T]} e^{tX}(K)$ is compact for every $X\in \mathcal{X}$, $T\in \R$ and hence,  $\mathcal{R}(K)$ is $\sigma$-compact. 
But by Krener's theorem (theorem~2 from~\cite{Krener74}) $\mathcal{R}(\{x\})$ contains an open set for every $x\in \R^d$,
and therefore Corollary~\ref{co_contr1} implies $\mathcal{R}(\{x\})=\R^d$  for all $x\in \R^d$, which is the claim of the Chow-Rashevskii theorem. Note that Krener's theorem used here does not involve anything of the machinery of the classical proof of the
Chow-Rashevskii theorem (namely it does not use Taylor formulae for the flows); it is based only on the immediate observation that
if two smooth vector fields are tangent to some smooth submanifold of $\R^d$ in a neighborhood of some point, then so is also its
Lie bracket (in the same neighborhood). Summing up, we see that  Theorem~\ref{th_chow1weak} and, henceforth, Corollary~\ref{co_contr1} provide an alternative proof of the classical Chow-Rashevskii theorem.
\end{remark}

\begin{remark}
For the Krener's theorem referenced in the above Remark~\ref{rem_classChow1} to be valid, even the classical H\"{o}rmander condition is unnecessarily strong. It is enough in fact to require a weaker condition, namely, the for every point $p\in \R^d$  in the phase space, every open neighborhood $U\subset \R^d$ of this point, and every smooth submanifold $N\subset \R^d$ of strictly positive codimension
containing $p$, there is an $x\in N\cap U$ and an $X_i\in \mathcal{X}$ such that the vector $X_i(x)$ is not tangent to $N$.  
\end{remark}

\section{Proof ot Theorem~\ref{th_chow1weak}}

The proof of the above Theorem~\ref{th_chow1weak} will require several steps. For brevity, we write $E:=E(B)$.

\subsection{Basic idea of the proof}

We explain here the basic idea of the proof in simple terms, which in fact make it more wishful thinking than
a rigorous argument. The latter, however, will be developed in the following subsections in full details following
these basic ideas.

To prove  Theorem~\ref{th_chow1weak} we first show that the set $E=E(B)$ is invariant under the flow of every vector field
$X\in \{X_1,\ldots, X_k\}$. This means that the 
density $\mathbf{1}_E$
satisfies the stationary transport equation
\[
X\cdot \nabla \mathbf{1}_E=0.
\]
We then show that $X\cdot \nabla \mathbf{1}_E=0$ and $Y\cdot \nabla \mathbf{1}_E=0$ imply
\[
[X,Y]\cdot \nabla \mathbf{1}_E=0.
\] 
This is quite intuitive once we think of $X\cdot \nabla \mathbf{1}_E$ in the differential geometry style as the action $X\mathbf{1}_E$ of the vector field $X$ on the function $\mathbf{1}_E$: in fact, once $X\mathbf{1}_E=0$ and $Y\mathbf{1}_E=0$, then
\[
[X,Y]\mathbf{1}_E= X(Y\mathbf{1}_E)-Y(X\mathbf{1}_E)=0.
\] 

Now, as a consequence we get by induction that 
\[
Y_j\cdot \nabla \mathbf{1}_E=0, \qquad j=1,\ldots, d
\]
for all vector fields $Y_j$ in the statement of the Sobolev-H\"{o}rmander condition.
In the matrix form, this is written as
\[
Y\nabla \mathbf{1}_E=0,
\]
 where $Y$ is the $d\times d$ matrix with columns $Y_j$,  $j=1,\ldots, d$. 
 Since the latter is assumed invertible, this means
 \[
\nabla  \mathbf{1}_E=0,
 \]
 that is,  $\mathbf{1}_E=\mathrm{const}$. The latter constant may be clearly either $0$ or $1$, but since $B\subset E$ and $B$ has nonzero measure, then $\mathbf{1}_E=1$, or, in other words, $E=\R^d$ up to a Lebesgue nullset, which is the claim being proven. 

\subsection{Invariance of $\mathbf{1}_E$}

\begin{lemma}\label{lm_invE1}
     For every $X\in \{X_1,\ldots, X_k\}$ 
     such that $e^{-sX}\mathcal{L}^d\ll \mathcal{L}^d$
     for every $s\in \R$, one has
     \[
     e^{tX} _* \mathbf{1}_E= \mathbf{1}_E
     \]
     $\mathcal{L}^d$-a.e.\ on $\R^d$
     for every $t\in \R$.
\end{lemma}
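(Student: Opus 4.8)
The plan is to show that the indicator function $\mathbf{1}_E$ is invariant under the flow $e^{tX}$, i.e. that $e^{tX}_* \mathbf{1}_E = \mathbf{1}_E$ a.e. The two inequalities $e^{tX}_* \mathbf{1}_E \leq \mathbf{1}_E$ and $e^{tX}_* \mathbf{1}_E \geq \mathbf{1}_E$ should be handled somewhat separately, and the key structural fact to exploit is that $E = E(B)$ is, by its very construction as the measure-theoretic supremum over the family $\mathcal{F}$, the largest (up to nullsets) set that is ``closed'' under applying the flows of the $X_k$ to $B$. First I would record the elementary measure-theoretic fact that pushforward and pullback are adjoint: for nonnegative Borel $f$ and $g$ one has $\int (e^{tX}_* f) g \, d\mathcal{L}^d = \int f \, (e^{-tX}_* g)\, d\mu_{-t}$ or, more usefully, that $e^{tX}_* \mathbf{1}_D = \mathbf{1}_{(e^{tX})^{-1}(D)}$ on the set where $e^{tX}$ is defined, and that by the group property~(iii) of Definition~\ref{def_flowgen1}, $(e^{tX})^{-1}(D) = e^{-tX}(D)$ up to a nullset. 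The hypothesis $e^{-sX}_\# \mathcal{L}^d \ll \mathcal{L}^d$ for all $s$ is exactly what guarantees that ``up to a nullset'' is preserved under these operations, so that all the identifications survive passage to a.e.\ statements.

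\medskip

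For the inequality $e^{tX}_* \mathbf{1}_E \geq \mathbf{1}_E$: take any $D \in \mathcal{F}$, say $D = e^{t_{j_1}X_{j_1}} \circ \cdots \circ e^{t_{j_m}X_{j_m}} B$. Then $e^{-tX}(D) = e^{-tX} \circ e^{t_{j_1}X_{j_1}} \circ \cdots \circ e^{t_{j_m}X_{j_m}} B$ is, again up to a nullset (using the group property and absolute continuity to compose cleanly), another member of $\mathcal{F}$ — now with $X$ prepended with parameter $-t$. Hence $\mathbf{1}_{e^{-tX}(D)} \leq f_B = \mathbf{1}_E$ a.e., and pushing forward by $e^{tX}$ (which preserves a.e.\ inequalities because $e^{-tX}_\#\mathcal{L}^d \ll \mathcal{L}^d$, so pulling back a nullset stays null — here one uses that $\left(e^{tX}\right)^{-1}$ of a nullset is null, which is the content of $e^{-tX}_\#\mathcal{L}^d\ll\mathcal{L}^d$ together with the group property) gives $\mathbf{1}_D \leq e^{tX}_* \mathbf{1}_E$ a.e.\ on the image of $e^{tX}$; off that image $e^{tX}$ is taken to be the identity by convention, there $\mathbf{1}_D \le \mathbf{1}_E$ trivially. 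Taking the measure-theoretic supremum over $D \in \mathcal{F}$ yields $\mathbf{1}_E = f_B \leq e^{tX}_* \mathbf{1}_E$ a.e., provided one checks that $g \mapsto e^{tX}_* g$ commutes with (or at least does not decrease) the essential supremum, which follows since $h \leq e^{tX}_* \mathbf{1}_E$ a.e.\ for every $D$-indicator $h$ forces $f_B \le e^{tX}_*\mathbf{1}_E$ by the defining minimality of $f_B$.

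\medskip

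For the reverse inequality $e^{tX}_* \mathbf{1}_E \leq \mathbf{1}_E$: note $e^{tX}_* \mathbf{1}_E = \mathbf{1}_{e^{-tX}(E)}$ a.e., so it suffices to see that $e^{-tX}(E) \subseteq E$ up to a nullset; but then applying the same with $t$ replaced by $-t$ gives the opposite containment and hence equality. To see $e^{-tX}(E)\subseteq E$ a.e., the cleanest route is to argue that the function $g := \mathbf{1}_E \vee e^{tX}_*\mathbf{1}_E \vee e^{-tX}_*\mathbf{1}_E \vee \dots$ — or rather the essential supremum of $e^{sX}_*\mathbf{1}_E$ over all $s\in\R$, which by the first part dominates $\mathbf{1}_E$ — still majorizes every $\mathbf{1}_D$, $D\in\mathcal F$: indeed $e^{sX}_* \mathbf{1}_D = \mathbf{1}_{e^{-sX}(D)}$ and $e^{-sX}(D)\in\mathcal F$ (up to nullset) as above, so $e^{sX}_*\mathbf{1}_D \le f_B$, hence $e^{sX}_* \mathbf{1}_E \le f_B = \mathbf{1}_E$ directly once we know $\mathbf{1}_E \le e^{-sX}_* \mathbf{1}_D$ won't help — so instead simply: for each $D$, $\mathbf{1}_D = e^{sX}_* \mathbf{1}_{e^{-sX}(D)} \le e^{sX}_* f_B = e^{sX}_*\mathbf{1}_E$, and since $f_B$ is the \emph{least} majorant of all $\mathbf{1}_D$ while $e^{-sX}_*\mathbf{1}_E$ (equivalently $\mathbf{1}_{e^{sX}(E)}$) is seen to be such a majorant by the first-part argument with $-s$, minimality gives $\mathbf{1}_E = f_B \le \mathbf{1}_{e^{sX}(E)}$, i.e. $E \subseteq e^{sX}(E)$ a.e.\ for all $s$; taking $s = -t$ and $s = t$ gives $E \subseteq e^{-tX}(E)$ and $E \subseteq e^{tX}(E)$, and intersecting (pushing one forward) yields $e^{-tX}(E) = E$ up to a nullset, whence $e^{tX}_*\mathbf{1}_E = \mathbf{1}_E$ a.e.

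\medskip

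The main obstacle I anticipate is purely bookkeeping with nullsets: the group property~(iii) only holds off a nullset depending on $t,s$, the pushforward of an a.e.\ inequality is legitimate only because of the absolute-continuity hypothesis $e^{-sX}_\#\mathcal L^d\ll\mathcal L^d$, and the family $\mathcal F$ is uncountable so one must be careful that taking a measure-theoretic supremum over it (rather than a pointwise one) interacts correctly with these a.e.\ identities — but this is precisely why $f_B$ was defined via the essential supremum with its minimality property, so the argument should go through by repeatedly invoking that minimality rather than manipulating uncountably many nullsets directly. A secondary point is justifying $e^{tX}_*\mathbf{1}_D = \mathbf{1}_{e^{-tX}(D)}$ and the composition $e^{-tX}\circ e^{t_{j_1}X_{j_1}}\circ\cdots \in\mathcal F$ up to nullsets, which again reduces to~(iii) plus the convention $e^{tX}x := x$ where undefined.
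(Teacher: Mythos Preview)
Your proposal is correct and follows essentially the same approach as the paper: prove $\mathbf{1}_E \le e^{tX}_*\mathbf{1}_E$ by observing that $e^{-tX}(D)\in\mathcal F$ for every $D\in\mathcal F$, hence $\mathbf{1}_D=e^{tX}_*\mathbf{1}_{e^{-tX}(D)}\le e^{tX}_*\mathbf{1}_E$ (via Lemma~\ref{lm_auxineq1}), and conclude by minimality of $f_B$; then obtain the reverse inequality by applying the pullback $e^{-tX}_*$ to the already-established inequality and using the group property, exactly as in~\eqref{eq_Emaj1}--\eqref{eq_Emaj2}. Your treatment of the second inequality is more circuitous than necessary --- once you have $\mathbf{1}_E\le e^{sX}_*\mathbf{1}_E$ for all $s$, the paper simply pulls back by $e^{-tX}$ to get $e^{-tX}_*\mathbf{1}_E\le e^{-tX}_*(e^{tX}_*\mathbf{1}_E)=\mathbf{1}_E$ in one line, whereas you pass through set inclusions and push forward, which is the same computation in disguise --- but the argument is sound.
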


\begin{remark}
	Note that in the above Lemma we do not request any regularity assumption on the vector fields $X_j$ except that
	they be flow generating. Likewise, in all the subsequent lemmata we list all the requested assumptions on the vector fields involved explicitly in the statements, without referring to any common assumption.
\end{remark}

\begin{proof}
For every $D\in\mathcal{F}$ one has $\mathbf{1}_D= e^{tX}_*\mathbf{1}_{D'}$
for $D':= e^{-tX} D$. But $D'\in \mathcal{F}$, hence
\[
\mathbf{1}_{D'}\leq \mathbf{1}_E,
\]
and therefore by Lemma~\ref{lm_auxineq1} one has
\[
\mathbf{1}_D=e^{tX}_* \mathbf{1}_{D'}\leq e^{tX}_* \mathbf{1}_E.
\]
This means $e^{tX}_* \mathbf{1}_E$ majorizes all $\mathbf{1}_D$, $D\in \mathcal{F}$, which implies 
\begin{equation}\label{eq_Emaj1}
	\mathbf{1}_E\leq e^{tX}_*  \mathbf{1}_E.
\end{equation}
From~\eqref{eq_Emaj1} we have
\[
\mathbf{1}_E= (e^{tX}\circ e^{-tX})_*  \mathbf{1}_E = e^{-tX}_*(e^{tX}_*  \mathbf{1}_E) \geq e^{-tX}_* \mathbf{1}_E
\]
again by Lemma~\ref{lm_auxineq1}, 
and since $t\in \R$ is arbitrary, we finally get
\begin{equation}\label{eq_Emaj2}
	\mathbf{1}_E\geq e^{tX}_*  \mathbf{1}_E.
\end{equation}
The estimates~\eqref{eq_Emaj1} and~\eqref{eq_Emaj2} together give the claim.
\end{proof}

As a direct consequence of Lemma~\ref{lm_invE1} we get the following result.

\begin{lemma}\label{lm_invE2}
	Let $X\colon \R^d\to \R^d$ be flow generating, with 
\begin{equation}\label{eq_subl1}
		\begin{aligned}
		\mathrm{div} X &\in W^{1,1}_{loc}(\R^d)\cap L^\infty_{loc} (\R^d),\quad \mbox{and}\\
	|X(x)|&\leq \alpha |x|+\beta
	\end{aligned}
\end{equation}	for a.e.\ $x\in \R^d$ and for some $\alpha>0$, $\beta>0$. 
		Then, the transport equation
	\begin{equation}\label{eq_X1Ea}
			\mathrm{div} (X \mathbf{1}_E)- \mathbf{1}_E\mathrm{div} X =0 
	\end{equation}
		holds in the sense of distributions, i.e.\
		\[
		-\int_E \nabla \varphi\cdot X \, dx-\int_E \varphi \mathrm{div} X \, dx =0
		\]
		for all $\varphi\in C_0^\infty(\R^d)$.
	In particular, if $X$ is divergence free, i.e.\ $\mathrm{div} X=0$, then
	$\mathrm{div} (X \mathbf{1}_E)=0$ in the sense of distributions, i.e.\
	\[
	\int_E \nabla \varphi\cdot X \, dx=0
	\]
	for all $\varphi\in C_0^\infty(\R^d)$.
\end{lemma}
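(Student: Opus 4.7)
The strategy is to translate the pullback invariance of $\mathbf{1}_E$ established in Lemma~\ref{lm_invE1} into the distributional identity $\mathrm{div}(X \mathbf{1}_E) = \mathbf{1}_E \mathrm{div} X$ via smooth approximation of the vector field combined with the classical transport equation for smooth flows. Specifically, let $V_k := X \ast \rho_k$ be a standard mollification, so that $V_k \in C^\infty(\R^d;\R^d)$, $V_k \to X$ in $L^1_{loc}$, and $\mathrm{div}\, V_k = (\mathrm{div}\, X) \ast \rho_k \to \mathrm{div}\, X$ both in $L^1_{loc}$ and locally uniformly bounded in $L^\infty$ (thanks to $\mathrm{div}\, X \in L^\infty_{loc}$). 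The sublinear growth \eqref{eq_subl1} is inherited by $V_k$ uniformly in $k$, so each $V_k$ generates a classical global flow $\varphi_{V_k}^t$; by property~(v) of Definition~\ref{def_flowgen1}, strengthened through the stability of regular Lagrangian flows as in Remark~\ref{rm_RLF1}, one may assume $\varphi_{V_k}^t \to e^{tX}$ pointwise a.e.\ (along a subsequence) for a.e.\ $t \in \R$.

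For each smooth $V_k$ the pullback $u_t^k := \mathbf{1}_E \circ \varphi_{V_k}^{-t} = \mathbf{1}_{\varphi_{V_k}^t(E)}$ is a distributional solution of $\partial_t u_t^k + V_k \cdot \nabla u_t^k = 0$ with initial datum $u_0^k = \mathbf{1}_E$. Testing against $\varphi \in C_0^\infty(\R^d)$ and integrating in time from $0$ to $t\in\R$ yields
\[
\int \varphi\, u_t^k\, dx \,-\, \int_E \varphi\, dx \,=\, \int_0^t \int u_s^k\, \mathrm{div}(\varphi V_k)\, dx\, ds.
\]
The key point—which crucially uses Lemma~\ref{lm_invE1}—is that $u_t^k \rightharpoonup \mathbf{1}_E$ weakly-$\ast$ in $L^\infty_{loc}(\R^d)$ for a.e.\ $t$. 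Granted this, the left-hand side tends to $0$. On the right-hand side, the $L^1_{loc}$ convergences $V_k \to X$ and $\mathrm{div}\, V_k \to \mathrm{div}\, X$, combined with the uniform bound $|u_s^k| \leq 1$ and the same weak-$\ast$ convergence, give for a.e.\ $s$ the convergence of the inner integral to $\int_E \mathrm{div}(\varphi X)\, dx$; a further dominated convergence in $s$ (using the local $L^\infty$ bounds uniform in $k$ and the compact support of $\varphi$) then yields the limit $t \int_E \mathrm{div}(\varphi X)\, dx$ on the right. Equating the two limits gives $t \int_E \mathrm{div}(\varphi X)\, dx = 0$ for every $t\in\R$, hence $\int_E \mathrm{div}(\varphi X)\, dx = 0$, which is the claimed distributional transport equation.

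The principal technical hurdle is the weak-$\ast$ convergence $u_t^k \rightharpoonup \mathbf{1}_E$, since $\mathbf{1}_E$ is discontinuous while the flow convergence is only pointwise a.e. It is established via the change-of-variables formula valid for the smooth diffeomorphism $\varphi_{V_k}^t$: for $\psi \in C_0(\R^d)$,
\[
\int \psi\, u_t^k\, dx \,=\, \int_E \psi(\varphi_{V_k}^t(y))\, J_t^k(y)\, dy, \qquad J_t^k(y) := \exp\!\left(\int_0^t \mathrm{div}\, V_k(\varphi_{V_k}^s(y))\, ds\right).
\]
Pointwise a.e.\ convergence of $\varphi_{V_k}^s$, together with the convergence of $\mathrm{div}\, V_k$ at Lebesgue points of $\mathrm{div}\, X$ (valid since $\mathrm{div}\, X \in L^1_{loc}$) and the locally uniform bound on $\mathrm{div}\, V_k$, allow dominated convergence inside the exponential and in the full integral to give convergence to $\int_E \psi(e^{tX}(y))\, J_t(y)\, dy$; property~(iv) of the flow together with Lemma~\ref{lm_invE1} finally identifies this limit as $\int \psi\, \mathbf{1}_E\, dx$, closing the argument.
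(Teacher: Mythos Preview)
Your route differs from the paper's, and the place where it breaks is precisely the ``principal technical hurdle'' you flag: the weak-$\ast$ convergence $u_t^k \rightharpoonup \mathbf{1}_E$. You justify it by first upgrading Definition~\ref{def_flowgen1}(v) to pointwise a.e.\ convergence $\varphi_{V_k}^t \to e^{tX}$, citing regular Lagrangian flow stability via Remark~\ref{rm_RLF1}. But the lemma is stated for an arbitrary flow-generating field in the sense of Definition~\ref{def_flowgen1}, which only guarantees $\mu_t^k \rightharpoonup \mu_t$ as measures; you are importing a hypothesis the statement does not make. Even granting pointwise flow convergence, the Jacobian step is shaky: you need $\mathrm{div}\,V_k(\varphi_{V_k}^s(y)) \to \mathrm{div}\,X(e^{sX}(y))$ for a.e.\ $(s,y)$, and ``convergence of mollifications at Lebesgue points'' does not deliver this when the evaluation point itself moves with $k$, nor is it clear that the trajectory $s\mapsto e^{sX}(y)$ sees only Lebesgue points of $\mathrm{div}\,X$. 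Finally, identifying the limit $\int_E \psi(e^{tX}(y))\,J_t(y)\,dy$ with $\int \psi\,\mathbf{1}_E\,dx$ presupposes that the pointwise limit $J_t$ exists and acts as the Jacobian of the (non-differentiable) flow $e^{tX}$, which is exactly the kind of regularity one does not have here.

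A telling symptom: your argument never uses $\mathrm{div}\,X \in W^{1,1}_{loc}$, only $\mathrm{div}\,X \in L^\infty_{loc}$. The paper's proof takes a different and more robust path. It writes $\int_E \varphi\,dx = \int_E \varphi(e^{-tX}(y))\,d\mu_t(y)$ directly from Lemma~\ref{lm_invE1}, then invokes Lemma~\ref{lm_LnC2nonsmooth}: for each mollified field $V_k$ one computes the density $\rho^k$ of $\mu_t^k$ explicitly and bounds $\|\rho^k_{tt}(t,\cdot)\|_{L^1(K)}$ \emph{uniformly in $k$} using $\nabla\mathrm{div}\,X \in L^1_{loc}$ (this is where $W^{1,1}_{loc}$ enters). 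That yields a Taylor expansion $\int f\,d\mu_t^k = \int f\,dx - t\int f\,\mathrm{div}\,V_k\,dx + O(t^2)$ with remainder constant independent of $k$; passing to the limit needs only the weak convergence $\mu_t^k \rightharpoonup \mu_t$ from Definition~\ref{def_flowgen1}(v). Differentiating at $t=0$ then gives~\eqref{eq_X1Ea}. The uniform second-order control replaces your need to track the limit of $u_t^k$.
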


\begin{proof}
	Fix an arbitrary $\varphi\in C_0^\infty(\R^d)$. Then
	\begin{equation}\label{eq_derflow0}
	\int_E\varphi(x)\, dx = 
	\int_E\varphi(e^{-tX} (e^{tX}(x)))\, dx = 	\int_E\varphi(e^{-tX} (y))\, d\mu_t(y),
\end{equation}
	where 
	$\mu_t:= e^{tX}_\# (\mathcal{L}^d)$ (so that $\mu_0=dx$).
	By~Definition~\ref{def_flowgen1}(vi) we may assume without loss of generality that the sublinear estimate in~\eqref{eq_subl1} holds for all $x\in \R^d$. We further set artificially $e^{tX} (x):=x$ for $x\in \R^d$ for which the flow $e^{tX}$ is undefined.
Thus by Lemma~\ref{lm_Kprime_est2} applied with $X$ in place of $V$ 
	one has $e^{tX} (\supp\varphi)\subset K'$ for all $t\in (-T,T)$ and some compact set $K'\subset \R^d$ 
	 depending only on $\supp\varphi$, $\alpha$, $\beta$ and $T>0$. 
	Therefore we may apply Lemma~\ref{lm_LnC2nonsmooth} with $f(t,x):=\varphi(e^{-tX} (x)) \mathbf{1}_E(x)$, $V:=X$
 to obtain the relationship	
	\begin{equation}\label{eq_derflow1}
		\begin{aligned}
		\int_{E} \varphi(e^{-tX} (x)) \, d\mu_t(x) &= \int_{E} \varphi(e^{-tX} (x)) \, dx - t \int_{E} \varphi(e^{-tX} (x)) \mathrm{div} X(x) \, dx + R(t) ,
	\end{aligned}
	\end{equation}	
	valid for all $t\in (-T,T)$, where $|R(t)|\leq C\|\varphi\|_\infty t^2/2$ for some $C>0$ independent of $t$.
	But
\begin{equation}\label{eq_derflow2}
	\begin{aligned}
		\left.\frac{d\,}{dt}\right|_{t=0}	\int_E 	\left(\varphi(e^{-tX} (x)\right)\, dx  
		=\int_E \left.\frac{d\,}{dt}\right|_{t=0}	\left(\varphi(e^{-tX} (x)\right)\, dx  
		= 		-\int_E \nabla \varphi\cdot X \, dx,
	\end{aligned}
\end{equation}	
the first equality (i.e. the passage of the derivative inside the integral) is due to the Lebesgue dominated convergence theorem.
Analogously,
\begin{equation*}\label{eq_derflow2a}
	\begin{aligned}
		\left.\frac{d\,}{dt}\right|_{t=0}	\int_E 	\left(\varphi(e^{-tX} (x)\right)\mathrm{div} X(x)\, dx  
			= 		-\int_E \nabla \varphi\cdot X \mathrm{div} X\, dx,
	\end{aligned}
\end{equation*}	
and thus
\begin{equation}\label{eq_derflow2b}
	\begin{aligned}
		\left.\frac{d\,}{dt}\right|_{t=0}	\left( t \int_E 	\left(\varphi(e^{-tX} (x)\right)\mathrm{div} X(x)\, dx\right)  
		= 		\int_E \varphi(x)\mathrm{div} X(x)\, dx.
	\end{aligned}
\end{equation}	
Combining~\eqref{eq_derflow1}  with~\eqref{eq_derflow2} and~\eqref{eq_derflow2b}, we arrive at
	\begin{equation}\label{eq_derflow3}
	\begin{aligned}
			\left.\frac{d\,}{dt}\right|_{t=0}		\int_{E} \varphi(e^{-tX} (x)) \, d\mu_t(x) &= 
				-\int_E \nabla \varphi\cdot X \, dx -\int_E \varphi\mathrm{div} X\, dx		
	\end{aligned}
\end{equation}	
Hence from~\eqref{eq_derflow0} using~\eqref{eq_derflow3}, we get
\begin{equation*}
		\begin{aligned}
	0 &=\left. \frac{d\,}{dt}\right|_{t=0}	\left(\int_E\varphi(x)\, dx\right)\\
	& = 
		\left.\frac{d\,}{dt}\right|_{t=0}		\int_{E} \varphi(e^{-tX} (x)) \, d\mu_t(x) 
		= 
		-\int_E \nabla \varphi\cdot X \, dx -\int_E \varphi\mathrm{div} X\, dx	
\end{aligned}
\end{equation*}
as claimed.
\end{proof}

\begin{remark}
	Formally writing 
	\[
		\mathrm{div} (X \mathbf{1}_E)= \mathbf{1}_E\mathrm{div} X + X\cdot \nabla \mathbf{1}_E 
	\]
	as if $X$ and $\mathbf{1}_E$ were smooth and $\mathrm{div}$ were the classical (and not just distributional) divergence operator, we 
	see that~\eqref{eq_X1Ea} becomes the transport equation in its usual form
\begin{equation}\label{eq_divX2}
		X\cdot \nabla \mathbf{1}_E =0,
\end{equation}	
	that is, the action of a vector field $X$ on the characteristic function $\mathbf{1}_E$ is zero.
	Although heuristically justifying the name ``transport equation'' attributed to~\eqref{eq_X1Ea}, this is of course, 
	only purely formal. In fact, a priori $E$ is just a Borel set, and hence
	 the gradient $ \nabla \mathbf{1}_E$ can only be understood in the distributional sense and is just a distribution, not even a measure. However a posteriori, when Theorem~\ref{th_chow1weak} is proven, 
	 it claims actually $\mathbf{1}_E=1$, so that~\eqref{eq_divX2} is satisfied. 
\end{remark}

\subsection{Distributional solutions of the transport equation~\eqref{eq_X1Ea}}

We make a couple of observations regarding Sobolev vector fields $X$ satisfying~\eqref{eq_X1Ea}.

\subsubsection{Properties of solutions}

\begin{lemma}\label{lm_invE2a}
	If $X\in W^{1,q}_{loc} (\R^d;\R^d)$ with $\mathrm{div} X\in L^r_{loc}(\R^d)$, $r>1$,
	satisfies~\eqref{eq_X1Ea} in the sense of distributions, then so does
	$\psi X$ for every $\psi\in C_0^\infty(\R^d)$. 
\end{lemma}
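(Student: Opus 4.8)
The plan is to unravel the distributional identity \eqref{eq_X1Ea} for $X$ and compare it with the analogous identity for $\psi X$, testing the latter against an arbitrary $\varphi\in C_0^\infty(\R^d)$. First I would write out what the assertion to be proven means: $\psi X$ satisfies \eqref{eq_X1Ea} in the sense of distributions iff
\[
-\int_E \nabla\varphi\cdot(\psi X)\,dx - \int_E \varphi\,\mathrm{div}(\psi X)\,dx = 0
\quad\text{for all }\varphi\in C_0^\infty(\R^d).
\]
Since $\psi\in C_0^\infty(\R^d)$, the Leibniz rule gives $\mathrm{div}(\psi X) = \psi\,\mathrm{div} X + X\cdot\nabla\psi$ (valid classically because $\psi$ is smooth and $X\in W^{1,q}_{loc}$, $\mathrm{div} X\in L^r_{loc}$, so both sides are in $L^1_{loc}$), and also $\nabla\varphi\cdot(\psi X) = (\psi\nabla\varphi)\cdot X = \nabla(\psi\varphi)\cdot X - \varphi\nabla\psi\cdot X$. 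Substituting these two expansions, the two terms $\int_E \varphi\nabla\psi\cdot X$ cancel, and the expression collapses to
\[
-\int_E \nabla(\psi\varphi)\cdot X\,dx - \int_E (\psi\varphi)\,\mathrm{div} X\,dx.
\]

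The key observation is then that $\psi\varphi\in C_0^\infty(\R^d)$, so this is exactly \eqref{eq_X1Ea} for $X$ tested against the admissible test function $\psi\varphi$, hence it vanishes by hypothesis. This closes the argument. The only points requiring a word of care — and I would state them briefly rather than belabor them — are that $\psi\varphi$ is indeed a legitimate test function (it is, being a product of a $C_0^\infty$ function with a $C^\infty$ one, with compact support inherited from either factor), and that each integrand appearing along the way is genuinely integrable over $E$ (equivalently over $\R^d$, as all integrands have compact support); this follows from $X\in W^{1,q}_{loc}\subset L^1_{loc}$, $DX\in L^q_{loc}$, $\mathrm{div} X\in L^r_{loc}$ with $r>1$, and the smoothness and compact support of $\varphi,\psi$ and their derivatives, so no delicate duality of Sobolev exponents is actually needed here.

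There is essentially no main obstacle: the statement is a routine consequence of bilinearity of the pairing together with the product/Leibniz rule, exploiting that the class of test functions is closed under multiplication by $C_0^\infty$ functions. The mild point to watch is purely bookkeeping — making sure the Leibniz expansion of $\mathrm{div}(\psi X)$ is justified in the distributional sense and that all cross terms cancel exactly — but this is immediate given the regularity already assumed on $X$.
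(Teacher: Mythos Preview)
Your proof is correct and in fact more direct than the paper's. You test the identity for $\psi X$ against $\varphi$, expand via the Leibniz rule, cancel the cross terms, and recognize the result as the identity for $X$ tested against $\psi\varphi\in C_0^\infty(\R^d)$; the integrability and Leibniz justifications you note are indeed routine under the stated regularity.

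The paper takes a genuinely different route: it approximates $\mathbf{1}_E$ by smooth functions $u_k$, forms the commutator-type expressions $r_k := \mathrm{div}(u_k X) - u_k\,\mathrm{div} X = X\cdot\nabla u_k$, and invokes the auxiliary Lemma~\ref{lm_invE2a_syst1} to show $r_k\rightharpoonup 0$ in the sense of distributions. One then observes that the analogous expression for $\psi X$ is $\psi X\cdot\nabla u_k = \psi r_k$, which also tends to zero, and passes to the limit. Your argument is shorter and avoids the approximation machinery entirely for this particular lemma; the paper's approach, while heavier here, develops and exercises the approximation framework (Lemma~\ref{lm_invE2a_syst1}) that is needed anyway later in the proof of Lemma~\ref{lm_systtransp1b}, where the matrix $Y^{-1}$ is only $W^{1,s}_{loc}$ and one cannot simply absorb it into the test function.
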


\begin{proof}
	Since $r>1$ by assumption and $q^*>1$, then $(q^*\wedge r)' <\infty$.
Let $u_k$ be smooth functions with $u_k\to \mathbf{1}_E$ in $L^p_{loc}(\R^d)$ with $p\in [\hat q', +\infty)$,
where $\hat q:= q^*\wedge r$ 
(e.g.\ $u_k$ can be obtained from $\mathbf{1}_E$ by means of the customary convolution with an approximate identity). Then
\begin{align*}
r_k &:= \mathrm{div} u_k X - u_k\mathrm{div} X \rightharpoonup 0 
\end{align*}
in the sense of distributions as $k\to\infty$ by Lemma~\ref{lm_invE2a_syst1}. But then $r_k= X\cdot \nabla u_k$ and
\begin{align*}
	\mathrm{div} u_k (\psi X) - u_k\mathrm{div} (\psi X)  = \psi X\cdot \nabla u_k=
	\psi r_k \rightharpoonup 0 
\end{align*}
in the sense of distributions and hence passing to the limit as $k\to\infty$ in the above relationship
we get~\eqref{eq_X1Ea} with 
$\psi X$ instead of $X$ as claimed.
\end{proof}

We used here the first of the assertions of the following simple lemma that will be used at full strength also in the sequel.

\begin{lemma}\label{lm_invE2a_syst1}
	If $X\in W^{1,q}_{loc} (\R^d;\R^d)$ with $\mathrm{div} X\in L^r_{loc}(\R^d)$, $r>1$,  
	satisfies~\eqref{eq_X1Ea} in the sense of distributions
	and $u_k$ be smooth functions with $u_k\to \mathbf{1}_E$ in $L^p_{loc}(\R^d)$ with $p\in [ \hat q',+\infty)$. 
	where $\hat q:= q^*\wedge r$. 
	Then letting
	\begin{align*}
		r_k &:= \mathrm{div} u_k X - u_k\mathrm{div} X,
	\end{align*}
	one has $r_k\rightharpoonup 0$ in the sense of distributions as $k\to\infty$.
Moreover,
	\[
	\lim_k \sup\left\{\langle\varphi, r_k\rangle\colon \varphi\in C_0^\infty(\R^d), \supp\varphi\subset K, \|\varphi\|_{W^{1,s}(\R^d)}\leq 1\right\}=0
	\]
	when $1/s\leq 1-1/p- 1/\hat q$, $K\subset \R^d$ compact. In particular, one has for such $s$ that
	\[
	-\int_{\R^d} u_k \nabla \varphi\cdot X \, dx-\int_{\R^d} u_k \varphi \mathrm{div} X \, dx \to 
	0
	\]
	as $k\to\infty$.
\end{lemma}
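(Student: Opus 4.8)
The statement to prove is Lemma~\ref{lm_invE2a_syst1}: if $X\in W^{1,q}_{loc}$ with $\mathrm{div}\,X\in L^r_{loc}$ satisfies the transport equation \eqref{eq_X1Ea} distributionally, and $u_k\to\mathbf{1}_E$ in $L^p_{loc}$ with $p\geq\hat q'$ where $\hat q=q^*\wedge r$, then the commutators $r_k:=\mathrm{div}(u_kX)-u_k\,\mathrm{div}\,X$ converge to $0$ weakly in distributions, with a quantitative rate against $W^{1,s}$ test functions when $1/s\le 1-1/p-1/\hat q$. This is a Friedrichs/DiPerna--Lions-type commutator estimate, but made trivial by the fact that $\mathbf{1}_E$ is already a genuine solution: we do not need to commute convolution with differentiation and chase error terms, we only need to test.

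\textbf{Step 1: rewrite $r_k$ against a test function and split off the limit.} Fix $\varphi\in C_0^\infty(\R^d)$ with $\supp\varphi\subset K$. By the product rule for distributions (legitimate since $u_k$ is smooth, $X\in W^{1,q}_{loc}\hookrightarrow L^{q^*}_{loc}$ and $\mathrm{div}\,X\in L^r_{loc}$, so $u_kX\in L^1_{loc}$ with distributional divergence a locally integrable function), one has
\[
\langle\varphi,r_k\rangle=-\int_{\R^d}\nabla\varphi\cdot X\,u_k\,dx-\int_{\R^d}\varphi\,u_k\,\mathrm{div}\,X\,dx.
\]
Now subtract and add $\mathbf{1}_E$ in each integrand. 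The terms with $\mathbf{1}_E$ in place of $u_k$ sum to zero, precisely because $X$ satisfies \eqref{eq_X1Ea} distributionally (this is the hypothesis). Hence
\[
\langle\varphi,r_k\rangle=-\int_{K}(u_k-\mathbf{1}_E)\,\nabla\varphi\cdot X\,dx-\int_{K}(u_k-\mathbf{1}_E)\,\varphi\,\mathrm{div}\,X\,dx.
\]

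\textbf{Step 2: H\"older estimates on the two error integrals.} For the first integral, $\nabla\varphi\cdot X\in L^{q^*}_{loc}$ (as $X\in L^{q^*}_{loc}$ and $\nabla\varphi$ is bounded), so by H\"older with exponents $p$ and $p'$, and using $q^*\geq\hat q\geq p'$ (since $p\geq\hat q'$), one bounds it by $\|u_k-\mathbf{1}_E\|_{L^p(K)}\,\|\nabla\varphi\|_\infty\,\|X\|_{L^{q^*}(K)}\to 0$. For the second integral likewise $\mathrm{div}\,X\in L^r_{loc}\supset L^{\hat q}_{loc}$ with $\hat q\geq p'$, giving the bound $\|u_k-\mathbf{1}_E\|_{L^p(K)}\,\|\varphi\|_\infty\,\|\mathrm{div}\,X\|_{L^r(K)}\to 0$. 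This gives $r_k\rightharpoonup 0$. For the quantitative statement, the only change is that $\varphi$ and $\nabla\varphi$ are now controlled only in $L^s$, not $L^\infty$: apply H\"older with three exponents $p$, $\hat q$, $s$ satisfying $1/p+1/\hat q+1/s\le 1$, so $\|(u_k-\mathbf{1}_E)\nabla\varphi\cdot X\|_{L^1(K)}\le\|u_k-\mathbf{1}_E\|_{L^p(K)}\,\|X\|_{L^{q^*}(K)}\,\|\nabla\varphi\|_{L^s}$ and the analogous three-term split for the divergence term; taking the supremum over $\|\varphi\|_{W^{1,s}}\le 1$ and $k\to\infty$ yields the claimed limit, and the ``in particular'' statement is just this rewritten. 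One should double-check the embedding $W^{1,q}_{loc}\hookrightarrow L^{q^*}_{loc}$ is used in the form that allows $q^*$ to be any finite number when $q=d$ (as stipulated in the notation section), and that $q>1$ is not actually needed here --- only $q^*\geq\hat q$ and $r\geq\hat q$, which hold by definition of $\hat q$.

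\textbf{Main obstacle.} Honestly there is no deep obstacle: the lemma is essentially a one-line consequence of ``$\mathbf{1}_E$ solves the equation, so the $\mathbf{1}_E$-part of the pairing vanishes identically, and what remains is $u_k-\mathbf{1}_E$ times fixed $L^{\hat q}$ data.'' The only points requiring a modicum of care are (a) justifying the distributional product rule so that $\langle\varphi,r_k\rangle$ has the stated integral form --- this needs $u_kX$ and $u_k\,\mathrm{div}\,X$ to be locally integrable, which follows from the Sobolev embedding $X\in L^{q^*}_{loc}$ together with $\mathrm{div}\,X\in L^r_{loc}$ and $r,q^*>1\geq$ nothing problematic; and (b) bookkeeping the exponents to confirm $p\geq\hat q'$ is exactly what makes H\"older close in the qualitative case and $1/s\le 1-1/p-1/\hat q$ in the quantitative one. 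Neither is substantive, so the write-up is short.
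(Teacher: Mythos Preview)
Your proof is correct and, in the quantitative part, takes a slightly different route from the paper. The paper argues by weak compactness: it picks near-maximizing test functions $\varphi_k$ in the unit ball of $W^{1,s}$ supported in $K$, extracts a weakly convergent subsequence $\varphi_k\rightharpoonup\varphi$, and then passes to the limit in
\[
\langle\varphi_k,r_k\rangle=-\int u_k\,\nabla\varphi_k\cdot X\,dx-\int u_k\,\varphi_k\,\mathrm{div}\,X\,dx
\]
using the combination of strong convergence $u_k\to\mathbf{1}_E$ in $L^p_{loc}$ and weak convergence of $\varphi_k$ in $W^{1,s}$, together with the exponent conditions $1/s+1/q^*+1/p\le 1$ and $1/s+1/r+1/p\le 1$; the limit is then identified with the distributional pairing in~\eqref{eq_X1Ea}, hence zero. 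You instead subtract $\mathbf{1}_E$ from $u_k$ at the outset, so that the $\mathbf{1}_E$-part vanishes exactly by~\eqref{eq_X1Ea}, and then bound the remainder by a single three-factor H\"older inequality, obtaining the explicit estimate
\[
\sup_{\|\varphi\|_{W^{1,s}}\le 1,\ \supp\varphi\subset K}|\langle\varphi,r_k\rangle|\le C_K\,\|u_k-\mathbf{1}_E\|_{L^p(K)}.
\]
Your argument is more elementary (no compactness, no subsequences) and yields a quantitative rate; the paper's version is a standard weak--strong limit passage. Both rely on the same underlying observation, namely that the $\mathbf{1}_E$-contribution is exactly zero by hypothesis.
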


\begin{proof}
Let $\varphi_k\in C_0^\infty(\R^d), \supp\varphi_k\subset K, \|\varphi_k\|_{W^{1,s}(\R^d)}\leq 1$ be such that
\[
\langle\varphi_k, r_k\rangle	\geq \sup\left\{\langle\varphi, r_k\rangle\colon \varphi\in C_0^\infty(\R^d), \supp\varphi\subset K, \|\varphi\|_{W^{1,s}(\R^d)}\leq 1\right\}-\frac 1 k.
\]
Since $\supp\varphi_k\subset K\Subset \R^d$, then there is a subsequence of $k$ (not relabeled) such that
$\varphi_k\rightharpoonup\varphi$ in the weak sense of $W^{1,s}(\R^d)$, and therefore, since $u_k\to \mathbf{1}_E$ in $L^p_{loc}(\R^d)$, 
$\mathrm{div} X\in L^r_{loc}(\R^d)$ by the assumptions and $X\in L^{q^*}_{loc}(\R^d)$ by Sobolev embedding theorem,
observing that $1/s+1/q^* +1/p\leq 1$ and  $1/s+1/r +1/p\leq 1$,
we get
\begin{align*}
	\langle\varphi_k, r_k\rangle &= 	-\int_{\R^d} u_k \nabla \varphi_k\cdot X \, dx-\int_{\R^d} u_k \varphi_k \mathrm{div} X \, dx \\
	&\to -\int_E \nabla \varphi\cdot X \, dx-\int_E \varphi \mathrm{div} X \, dx =0.
\end{align*}
Thus 
\begin{align*}
0 &\leq \sup\left\{\langle\varphi, r_k\rangle\colon \varphi\in C_0^\infty(\R^d), \supp\varphi\subset K, \|\varphi\|_{W^{1,s}(\R^d)}\leq 1\right\}\\
& \leq \langle\varphi_k, r_k\rangle+\frac 1 k \to 0
\end{align*}
as $k\to\infty$ concluding the proof.
\end{proof}

Yet another  statement will be important.

\begin{lemma}\label{lm_invE3}
If  $X\in W^{1,q}_{loc} (\R^d;\R^d)$ with $\mathrm{div} X\in L^r_{loc}(\R^d)$, $r>1$, satisfies~\eqref{eq_X1Ea}  in the sense of distributions, then  
\begin{equation}\label{eq_divXphi2}
	-\int_E \nabla \varphi\cdot X \, dx-\int_E \varphi \mathrm{div} X \, dx =0
\end{equation}
for all  $\varphi\in W^{1,s}(\R^d)$ with compact support $s\geq \hat q'$, where $\hat q:= q^*\wedge r$.  
In particular, if $X$ is also divergence free, then
\begin{equation}\label{eq_divXphi1}
	\int_E \nabla \varphi\cdot X \, dx=0
\end{equation}
for all $\varphi\in W^{1,s}(\R^d)$ with compact support, $s\geq (q^*)'$.  
\end{lemma}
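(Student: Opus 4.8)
The plan is to argue by density: the claimed identity \eqref{eq_divXphi2} is exactly the statement that the linear pairing known to vanish on test functions extends by continuity to all compactly supported $\varphi\in W^{1,s}(\R^d)$ with $s\geq\hat q'$. Concretely, set
\[
\Lambda(\varphi):=-\int_E\nabla\varphi\cdot X\,dx-\int_E\varphi\,\mathrm{div}X\,dx,
\]
which by hypothesis \eqref{eq_X1Ea} satisfies $\Lambda(\varphi)=0$ for every $\varphi\in C_0^\infty(\R^d)$; the goal is to show $\Lambda(\varphi)=0$ for every $\varphi\in W^{1,s}(\R^d)$ with compact support. We may assume $s<+\infty$ (otherwise replace $s$ by the finite exponent $\hat q'$; note $\hat q=q^*\wedge r>1$ since $r>1$ and $q^*>1$, so that $\hat q'<+\infty$, and a compactly supported $W^{1,\infty}$ function lies in $W^{1,\hat q'}(\R^d)$).

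First I would check continuity of $\Lambda$ on compactly supported Sobolev functions. By the Sobolev embedding theorem $X\in L^{q^*}_{loc}(\R^d;\R^d)$, while $\mathrm{div}X\in L^r_{loc}(\R^d)$ by assumption. A direct computation gives $1/\hat q'=1-\max(1/q^*,1/r)=\min(1/(q^*)',1/r')$, hence $\hat q'=\max((q^*)',r')$, so the single hypothesis $s\geq\hat q'$ yields both $1/s+1/q^*\leq1$ and $1/s+1/r\leq1$. Consequently, for any compact $K\subset\R^d$, Hölder's inequality on the finite-measure set $K$ gives
\[
|\Lambda(\varphi)|\leq C(K,X)\,\|\varphi\|_{W^{1,s}(\R^d)}\qquad\mbox{for all }\varphi\in W^{1,s}(\R^d)\mbox{ with }\supp\varphi\subset K,
\]
where $C(K,X)$ depends only on $\mathcal{L}^d(K)$, $\|X\|_{L^{q^*}(K)}$ and $\|\mathrm{div}X\|_{L^r(K)}$; in particular $\Lambda$ is a bounded linear functional on the closed subspace $\{\varphi\in W^{1,s}(\R^d):\supp\varphi\subset K\}$.

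Then, given $\varphi\in W^{1,s}(\R^d)$ with compact support $K_0$, I would mollify: let $\varphi_\varepsilon:=\varphi*\rho_\varepsilon$ for a standard compactly supported approximate identity $\rho_\varepsilon$. For $\varepsilon\in(0,1)$ one has $\varphi_\varepsilon\in C_0^\infty(\R^d)$ with $\supp\varphi_\varepsilon\subset (K_0)_1=:K$, and $\varphi_\varepsilon\to\varphi$ in $W^{1,s}(\R^d)$ since $s<+\infty$. By \eqref{eq_X1Ea} one has $\Lambda(\varphi_\varepsilon)=0$ for every such $\varepsilon$, and by the continuity estimate on $\{\varphi:\supp\varphi\subset K\}$ established above, $\Lambda(\varphi)=\lim_{\varepsilon\to0}\Lambda(\varphi_\varepsilon)=0$, which is \eqref{eq_divXphi2}. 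The divergence-free case \eqref{eq_divXphi1} is the special case $\mathrm{div}X=0$: then $\mathrm{div}X\in L^r_{loc}(\R^d)$ for every $r<+\infty$, so $\hat q$ may be taken equal to $q^*$, giving the admissible range $s\geq(q^*)'$, and the second integral in $\Lambda$ vanishes identically.

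There is no serious obstacle here; the only points requiring care are the bookkeeping of conjugate and Sobolev exponents, showing that the single assumption $s\geq\hat q'$ makes both integrals in $\Lambda$ continuous under $W^{1,s}$-convergence, and the routine observation that mollification approximates a compactly supported $W^{1,s}$ function in $W^{1,s}$ while keeping all supports inside a fixed compact set, so that the local integrability bounds on $X$ and $\mathrm{div}X$ may be used uniformly. Alternatively, the same conclusion can be reached by combining the continuity estimate above with the second assertion of Lemma~\ref{lm_invE2a_syst1}, approximating $\mathbf{1}_E$ by smooth functions instead of (or in addition to) mollifying $\varphi$, but the direct density argument is the shortest.
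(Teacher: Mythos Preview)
Your proof is correct and follows essentially the same approach as the paper: both approximate the compactly supported $\varphi\in W^{1,s}(\R^d)$ by $C_0^\infty$ functions with supports in a fixed compact set, invoke the local integrability $X\in L^{q^*}_{loc}$ and $\mathrm{div}X\in L^r_{loc}$ together with $s\geq\hat q'$ to pass to the limit in~\eqref{eq_divXphik2}, and specialize to $r=\infty$ for the divergence-free case. Your presentation is in fact a bit more explicit about the exponent bookkeeping and the uniform containment of supports under mollification.
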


\begin{proof}
	If  
$\varphi\in W^{1,s}(\R^d)$, $s\geq \hat q'$, and $\supp\varphi$ is compact, consider an open set $\Omega\subset\R^d$ such that
$\supp\varphi \Subset \Omega$. Then $\varphi\in W^{1,\hat q'}_0(\Omega)$. Recalling that the latter space is the closure
in $W^{1,s}(\Omega)$
of  $C_0^\infty(\Omega) $, we
let
$\varphi_k\in C_0^\infty(\R^d) $ be such that $\varphi_k\to \varphi$ as $k\to \infty$ in  $W^{1,\hat q'}(\R^d)$, that is,
$\varphi_k\to \varphi$ in  $L^{s}(\R^d)$ and $\nabla \varphi_k\to \nabla \varphi$ in  $L^{s}(\R^d;\R^d)$ as $k\to \infty$. 
	Since
\begin{equation}\label{eq_divXphik2}
	-\int_E \nabla \varphi_k\cdot X \, dx-\int_E \varphi_k \mathrm{div} X \, dx =0,
\end{equation}
 then keeping in mind that  $\mathbf{1}_E X\in L^{q^*}_{loc}(\R^d;\R^d)$, $\mathbf{1}_E\mathrm{div}X \in L^r_{loc}(\R^d)$
	and passing to the limit as $k\to\infty$ in~\eqref{eq_divXphik2}, we get~\eqref{eq_divXphi2} concluding the proof of the general case.
If $X$ is divergence free, then it suffices to apply the proven claim with $r:=\infty$. 
\end{proof}

\begin{remark}
	If $X\in W^{1,q} (\R^d;\R^d)$, then we may act as in the proof of the above Lemma~\ref{lm_invE3} for an arbitrary
	$\varphi\in W^{1,q'}(\R^d)$, recalling that $W^{1q'}(\R^d)=W_0^{1,q'}(\R^d)$ and letting
	$\varphi_k\in C_0^\infty(\R^d) $ be such that $\varphi_k\to \varphi$ as $k\to \infty$ in  $W^{1,q'}(\R^d)$, 
	we get from~\eqref{eq_divXphik2} the validity of~\eqref{eq_divXphi2} (in particular,~\eqref{eq_divXphi1}
	for divergence free $X$) for all  	$\varphi\in W^{1,q'}(\R^d)$.
\end{remark}

\subsubsection{Lie brackets}

We prove now that if the transport equation~\eqref{eq_X1Ea} holds for two different vector fields, the it holds also for their Lie bracket.
In the formulation of the respective result we find it convenient to use the ``differential geometry style'' notation $Y\varphi:= Y \cdot \nabla\varphi$
for a vector field $Y$ and a smooth function $\varphi$. The purely ``heuristic intuition'' behind this result is then quite clear:
if $X\varphi=0$ and $Y\varphi=0$, then 
\[
[X,Y]\varphi = X(Y\varphi)-Y(X\varphi)=0.
\]

\begin{lemma}\label{lm_invE4}
	If 
	$Y\in W^{1,q'}_{loc}(\R^d;\R^d)$ with $\mathrm{div} Y \in W^{1,q'}_{loc}(\R^d)$, and $X\in W^{1,q}_{loc}(\R^d;\R^d)$
	with $\mathrm{div} X \in W^{1,q}_{loc}(\R^d)$, $q\in (1,+\infty)$,
	both satisfy the transport equation~\eqref{eq_X1Ea} in the sense of distributions, and $\varphi\in C_0^\infty(\R^d)$, then
\begin{equation}\label{eq_divXphiW2a6}
	-\int_E [X,Y]\varphi  \, dx-\int_E \varphi \mathrm{div}[X,Y] \, dx =0, 
\end{equation}
or, in other words, the Lie bracket $[X,Y]$ satisfies~\eqref{eq_X1Ea} in the sense of distributions.
In particular, if both $X$ and $Y$ are also divergence free, then
	\begin{equation}\label{eq_divXphiW2b}
	\int_E [X,Y]\varphi  \, dx=0.
\end{equation}
\end{lemma}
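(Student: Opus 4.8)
The plan is to make rigorous the heuristic computation $[X,Y]\varphi = X(Y\varphi)-Y(X\varphi)$ via a double regularization argument, exploiting the symmetric Sobolev assumptions on the pair $(X,Y)$. First I would unfold the definition $[X,Y]=DY\cdot X - DX\cdot Y$ and compute its divergence distributionally: since $\mathrm{div}[X,Y] = X\cdot\nabla\mathrm{div}Y - Y\cdot\nabla\mathrm{div}X$ (this is the classical identity, and it makes sense here because $\mathrm{div}X\in W^{1,q}_{loc}$, $\mathrm{div}Y\in W^{1,q'}_{loc}$, with $X\in L^{q^*}_{loc}$, $Y\in L^{(q')^*}_{loc}$ by Sobolev embedding), the quantity in \eqref{eq_divXphiW2a6} can be rewritten entirely in terms of $X$, $Y$, $\nabla\varphi$, $DX$, $DY$, $\nabla\mathrm{div}X$, $\nabla\mathrm{div}Y$ and $\varphi$. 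The goal is then to show this rewritten integral vanishes.

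The main step is to replace $\mathbf{1}_E$ by a smooth approximation $u_k\to\mathbf{1}_E$ in $L^p_{loc}$ (obtained by convolution, $p$ chosen large enough to meet all the integrability constraints coming from Hölder against the various $L^{q^*}$, $L^{(q')^*}$, $L^q$, $L^{q'}$ factors). For smooth $u_k$ the ``differential geometry'' manipulation is legitimate pointwise: writing $g_k:= Yu_k = Y\cdot\nabla u_k$ and $h_k:=Xu_k = X\cdot\nabla u_k$, one has the genuine identity
\[
[X,Y]u_k = X g_k - Y h_k
\]
as functions. Integrating $\varphi\,[X,Y]u_k$ against $dx$ and integrating by parts to move the $X$ and $Y$ derivatives off $g_k$, $h_k$ (producing the divergence terms), I would arrange things so that what appears on the right is a combination of expressions of the form $\int \varphi'\cdot X\, g_k\,dx + \int \varphi\,\mathrm{div}X\, g_k\,dx$ — i.e. exactly the bilinear pairing controlled by Lemma~\ref{lm_invE2a_syst1} applied to $X$ with test functions built from $\varphi$ and $Y$ (and symmetrically for $Y$ with test functions built from $\varphi$ and $X$). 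The key point is that $\psi X$ still satisfies \eqref{eq_X1Ea} by Lemma~\ref{lm_invE2a} for $\psi\in C_0^\infty$, and more relevantly that the defect term $r_k^X := \mathrm{div}(u_k X)-u_k\mathrm{div}X = X\cdot\nabla u_k$ tends to $0$ not merely distributionally but against $W^{1,s}$-bounded test functions with $1/s\le 1-1/p-1/\hat q$ (the quantitative part of Lemma~\ref{lm_invE2a_syst1}). Since $\varphi\cdot(\text{components of }Y)$ and similar products lie in such a Sobolev class once $Y\in W^{1,q'}_{loc}$ (here the compact support of $\varphi$ is essential), this forces the relevant pairings with $r_k^X$, $r_k^Y$ to vanish in the limit.

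The delicate bookkeeping — and the step I expect to be the main obstacle — is the exponent juggling: after integration by parts one gets terms where $g_k = Y\cdot\nabla u_k$ is multiplied by $X$ and by $\nabla$ of something involving $\varphi$ and $Y$, so one needs $\nabla u_k$ paired against a product of two Sobolev vector fields; this is why the assumption is the \emph{conjugate} pairing $X\in W^{1,q}$, $Y\in W^{1,q'}$ rather than both in the same space, and why one must check that all the products $\varphi Y_i$, $(\partial_j\varphi)Y_i$, $\varphi\,\partial_j Y_i$, $\varphi\,\mathrm{div}Y$, etc., land in a fixed $W^{1,s_0}$ or $L^{s_0}$ with $s_0$ compatible with the constraint $1/s_0\le 1-1/p-1/\hat q$ after shrinking $p$ toward $\hat q'$. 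Once every term is matched to either an instance of Lemma~\ref{lm_invE2a_syst1} (for $X$, resp. for $Y$) or to the convergence $u_k\to\mathbf{1}_E$ against a fixed $L^{s_0'}$ function, passing to the limit $k\to\infty$ yields \eqref{eq_divXphiW2a6}. The divergence-free case \eqref{eq_divXphiW2b} is then immediate by taking the divergences of $X$ and $Y$ to be zero (so $\mathrm{div}[X,Y]=0$ as well), exactly as in the last line of the proof of Lemma~\ref{lm_invE3}.
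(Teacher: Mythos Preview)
Your plan is correct and would produce a valid proof, but the paper organizes the argument differently and more economically. Rather than regularizing $\mathbf{1}_E$, the paper observes that $Y\varphi := Y\cdot\nabla\varphi$ is already a compactly supported $W^{1,q'}$ function and feeds it directly into Lemma~\ref{lm_invE3} (the extension of~\eqref{eq_X1Ea} to Sobolev test functions) for the field $X$, obtaining $-\int_E X(Y\varphi)-\int_E (Y\varphi)\,\mathrm{div}X=0$ in one line. Symmetrizing in $(X,Y)$ and subtracting produces the commutator term together with two cross-terms $\int_E\mathrm{div}(\varphi Y)\,\mathrm{div}X$ and $\int_E\mathrm{div}(\varphi X)\,\mathrm{div}Y$; these are dispatched by noting (Lemma~\ref{lm_invE2a}) that $\varphi Y$ itself satisfies~\eqref{eq_X1Ea} and invoking Lemma~\ref{lm_invE3} once more with test function $\psi\,\mathrm{div}X\in W^{1,q}$, after which Lemma~\ref{lm_divLie1} closes the identity. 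Your route --- regularize $\mathbf{1}_E$ by $u_k$, use the pointwise identity $[X,Y]\cdot\nabla u_k = X\cdot\nabla r_k^Y - Y\cdot\nabla r_k^X$, integrate by parts, and kill each resulting pairing via the quantitative part of Lemma~\ref{lm_invE2a_syst1} with test functions $\nabla\varphi\cdot X+\varphi\,\mathrm{div}X\in W^{1,q}$ (resp.\ the $W^{1,q'}$ analogue) --- is the ``dual'' organization: you approximate on the solution side, whereas the paper has already packaged the approximation on the test-function side into Lemma~\ref{lm_invE3}. The exponent bookkeeping is essentially identical; what your version buys is explicitness, what the paper's buys is brevity. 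One small slip in your write-up: to make $s=q$ (resp.\ $s=q'$) satisfy $1/s\le 1-1/p-1/\hat q$ you want $p$ \emph{large} (e.g.\ $p\ge d$ when $q,q'<d$), not $p$ close to $\hat q'$, which would force $s\to\infty$.
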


\begin{remark}\label{rm_invE41}
The regularity requirement of the above Lemma~\ref{lm_invE4} holds when, for instance,
$\{X,Y\}\subset W^{1,q}_{loc}(\R^d;\R^d)$ and $\{\mathrm{div} X, \mathrm{div} Y\} \subset W^{1,q}_{loc}(\R^d)$ for some $q\geq 2$, which is, in its turn, guaranteed when both $X$ and $Y$ belong to $W^{2,q}_{loc}(\R^d;\R^d)$, $q\geq 2$ (although this is only a sufficient, but not necessary condition). In fact, if $q\geq 2$, then $q'\leq 2\leq q$, and hence $Y\in W^{1,q'}_{loc}(\R^d;\R^d)$ with $\mathrm{div} Y \in W^{1,q'}_{loc}(\R^d)$. 
\end{remark}

\begin{proof}
	If
	$Y\in W^{1,q'}_{loc}(\R^d;\R^d)$ and $\varphi\in C_0^\infty(\R^d)$, then $Y\varphi:= Y \cdot \nabla \varphi\in W^{1,q'}(\R^d)$, and has compact support. Clearly once $\mathrm{div} X\in L^r_{loc}(\R^d)$ with some $r\geq q$, then $\hat q:= q^*\wedge r\geq q$, and hence
	$\hat q' \leq q'$. 
	Therefore we may apply
	Lemma~\ref{lm_invE3} with $Y\varphi$ in place of $\varphi$ and $s:=q'$, which gives
	\begin{equation}\label{eq_divXphiW2a}
		-\int_E X(Y\varphi ) \, dx-\int_E (Y \cdot \nabla \varphi)\, \mathrm{div} X \, dx =0.
	\end{equation} 
	Writing
	\[
	Y \cdot \nabla \varphi = \mathrm{div} (\varphi Y) - \varphi \mathrm{div} Y,
	\]
	we transform~\eqref{eq_divXphiW2a} into
\begin{equation}\label{eq_divXphiW2a1}
	-\int_E X(Y\varphi ) \, dx-\int_E \mathrm{div} (\varphi Y)\, \mathrm{div} X \, dx + \int_E \varphi 
	\mathrm{div} (Y)\, \mathrm{div} X \, dx =0.
\end{equation}
Interchanging $X$ with $Y$ in~\eqref{eq_divXphiW2a1} we get
\begin{equation}\label{eq_divXphiW2a2}
	-\int_E Y(X\varphi ) \, dx-\int_E \mathrm{div} (\varphi X)\, \mathrm{div} Y \, dx + \int_E \varphi 
	\mathrm{div} (X)\, \mathrm{div} Y \, dx =0,
\end{equation}
and subtracting~\eqref{eq_divXphiW2a2} from~\eqref{eq_divXphiW2a1} we arrive at
\begin{equation}\label{eq_divXphiW2a3}
	-\int_E [X,Y]\varphi  \, dx+\int_E \mathrm{div} (\varphi X)\, \mathrm{div} Y \, dx - \int_E \mathrm{div} (\varphi Y)\, \mathrm{div} X \, dx =0.
\end{equation}
This immediately gives~\eqref{eq_divXphiW2b} when both $X$ and $Y$ are divergence free.

For the generic case, by Lemma~\ref{lm_invE2a} $ \varphi Y$ satisfies~\eqref{eq_X1Ea} in the sense of distributions.
Thus we may apply Lemma~\ref{lm_invE3} with $\varphi Y\in W^{1,q'}(\R^d)$ instead of $X$,  
$q'$ in place of both $q$ and $r$,  $s:=q$, 
and
$\psi \mathrm{div} X\in W^{1,q}(\R^d)$ instead of $\varphi$, where
$\psi\in C_0^\infty(\R^d)$ is such that $\psi=1$ in a neighborhood of $\supp\varphi$,
to get
\begin{equation}\label{eq_divXphiW2a4}
 \int_E \mathrm{div} (\varphi Y)\, \mathrm{div} X \, dx =  -\int_E \varphi Y\cdot \nabla \mathrm{div}X \, dx.
\end{equation}
Interchanging $X$ and $Y$ in~\eqref{eq_divXphiW2a4} yields 
\begin{equation}\label{eq_divXphiW2a5}
	\int_E \mathrm{div} (\varphi X)\, \mathrm{div} Y \, dx =  -\int_E \varphi X\cdot \nabla \mathrm{div}Y \, dx.
\end{equation}
Now, plugging~\eqref{eq_divXphiW2a4} and~\eqref{eq_divXphiW2a5} into~\eqref{eq_divXphiW2a3}, and recalling that
\[
\mathrm{div}[X,Y] = X\cdot \nabla \mathrm{div}Y- Y\cdot \nabla \mathrm{div}X,
\]
by Lemma~\ref{lm_divLie1},
we finally arrive at~\eqref{eq_divXphiW2a6}.
%
%
%
%
\end{proof}

\subsection{A system of transport equations for $\mathbf{1}_E$}

\begin{lemma}\label{lm_systtransp1b}
	Let $\Omega\subset \R^d$ be an open connected set.
	If $Y_j\in W^{1,q}_{loc} (\Omega;\R^d)$, 
	$\mathrm{div} Y_j\in L^r_{loc} (\Omega;\R^d)$, $r>1$,
	$j=1,\ldots, d$,  
	satisfy~\eqref{eq_X1Ea}, i.e.\
	\[
	\mathrm{div} (Y_j \mathbf{1}_E)- \mathbf{1}_E\mathrm{div} Y_j =0 
	\]
	in the sense of distributions over $\Omega$, and the $d\times d$ matrix  $Y(x)$, the rows of which are vectors $Y_j(x)$, is invertible
	for a.e.\ $x\in \Omega$ with the matrix function
	$Y^{-1}\colon \Omega\to \R^{d\times d}$ satisfying $Y^{-1}\in  W^{1,s}_{loc} (\Omega;\R^{d\times d})$ for some $s>\hat q'$,
    where $\hat q':= q^*\wedge r$,
	then a.e.\ over $\Omega$ one has
\begin{align*}
	\mbox{either }  &\mathbf{1}_E= 1,\\ 
	\mbox{or }  &\mathbf{1}_E= 0, 
\end{align*}
the first option being valid necessarily when $\mathcal{L}^d(B\cap\Omega)>0$.
	In particular, if $\Omega=\R^d$, then $\mathbf{1}_E= 1$ a.e., that is, \ $\mathcal{L}^d(\R^d\setminus E)=0$. 
\end{lemma}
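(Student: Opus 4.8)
The plan is to deduce that the distributional gradient $\nabla\mathbf{1}_E$ vanishes on $\Omega$, so that $\mathbf{1}_E$ agrees a.e.\ on $\Omega$ with a constant by connectedness of $\Omega$, and then to identify that constant. The delicate point is that $\mathbf{1}_E$ is merely a characteristic function, so $\nabla\mathbf{1}_E$ is only a distribution and cannot be multiplied by the (non-smooth) matrix $Y^{-1}$; the formal computation $\nabla\mathbf{1}_E = Y^{-1}\big(Y\,\nabla\mathbf{1}_E\big) = 0$ must therefore be carried out by hand, testing against smooth functions and invoking the Leibniz rule for Sobolev products, with all integrability exponents kept under control precisely by the hypothesis $s>\hat q'$ (here $\hat q:=q^*\wedge r$).

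Write $A:=Y^{-1}\in W^{1,s}_{loc}(\Omega;\R^{d\times d})$, so that, the rows of $Y$ being $Y_1,\dots,Y_d$,
\[
\sum_{k=1}^d A_{ik}\,Y_k = e_i \quad\text{a.e.\ on }\Omega,\qquad i=1,\dots,d,
\]
$e_i$ denoting the $i$-th standard basis vector. Fix $\varphi\in C_0^\infty(\Omega)$ and $i\in\{1,\dots,d\}$; the goal is to show $\int_E\partial_i\varphi\,dx=0$. For each $k$ the function $A_{ik}\varphi$ lies in $W^{1,s}(\Omega)$ and has compact support in $\Omega$, and $s>\hat q'$, so it is an admissible test function for the transport equation \eqref{eq_X1Ea} satisfied by $Y_k$: applying Lemma~\ref{lm_invE3} to $Y_k$ (localised to $\Omega$ --- equivalently, mollifying $A_{ik}$ near $\supp\varphi$ and passing to the limit in the distributional identity, which is legitimate because $Y_k\in L^{q^*}_{loc}$, $\mathrm{div}Y_k\in L^r_{loc}$ and $1/s+1/q^*<1$, $1/s+1/r<1$) and expanding $\nabla(A_{ik}\varphi)=A_{ik}\nabla\varphi+\varphi\nabla A_{ik}$ yields
\[
\int_E A_{ik}\,(Y_k\cdot\nabla\varphi)\,dx = -\int_E \varphi\,(Y_k\cdot\nabla A_{ik})\,dx - \int_E \varphi\,A_{ik}\,\mathrm{div}Y_k\,dx.
\]
Summing this over $k=1,\dots,d$, the left-hand side becomes $\int_E\big(\sum_k A_{ik}Y_k\big)\cdot\nabla\varphi\,dx = \int_E e_i\cdot\nabla\varphi\,dx = \int_E\partial_i\varphi\,dx$. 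On the right-hand side each product $A_{ik}Y_k$ lies in $W^{1,1}_{loc}(\Omega;\R^d)$ --- again by $s>\hat q'$ together with Sobolev embedding --- so the Leibniz rule $\mathrm{div}(A_{ik}Y_k)=\nabla A_{ik}\cdot Y_k+A_{ik}\,\mathrm{div}Y_k$ holds as an $L^1_{loc}$ identity, whence
\[
\sum_{k=1}^d\big(\nabla A_{ik}\cdot Y_k+A_{ik}\,\mathrm{div}Y_k\big)=\mathrm{div}\Big(\sum_{k=1}^d A_{ik}Y_k\Big)=\mathrm{div}(e_i)=0 \quad\text{a.e.\ on }\Omega,
\]
so the summed right-hand side vanishes. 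Therefore $\int_E\partial_i\varphi\,dx=0$ for every $\varphi\in C_0^\infty(\Omega)$ and every $i$, i.e.\ $\nabla\mathbf{1}_E=0$ in $\mathcal{D}'(\Omega;\R^d)$.

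Finally, since $\Omega$ is open and connected, $\nabla\mathbf{1}_E=0$ forces $\mathbf{1}_E$ to agree a.e.\ on $\Omega$ with a constant, which by Lemma~\ref{lm_reacable_set1} is $0$ or $1$; this is the asserted dichotomy. If moreover $\mathcal{L}^d(B\cap\Omega)>0$, then, since $\mathbf{1}_B\le\mathbf{1}_E$ a.e.\ ($B$ coinciding up to a Lebesgue-null set with an element of $\mathcal{F}$, e.g.\ $e^{0\cdot X_1}B$), one has $\mathcal{L}^d(E\cap\Omega)\ge\mathcal{L}^d(B\cap\Omega)>0$, which rules out the value $0$ and leaves $\mathbf{1}_E=1$ a.e.\ on $\Omega$. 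Taking $\Omega=\R^d$ (connected, and with $\mathcal{L}^d(B)>0$ by the standing hypothesis) gives $\mathbf{1}_E=1$ a.e., that is $\mathcal{L}^d(\R^d\setminus E)=0$. The only genuine obstacle is the bookkeeping that legitimises multiplying the distribution $\nabla\mathbf{1}_E$ by the Sobolev matrix $Y^{-1}$ and applying the Leibniz rule to the products $A_{ik}Y_k$; the constraint $s>\hat q'$ is exactly what turns every product appearing in the argument into a genuine $L^1_{loc}$ function, so that all the integrations by parts and passages to the limit above are justified.
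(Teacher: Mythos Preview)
Your proof is correct and implements the same heuristic as the paper --- namely, ``$\nabla\mathbf{1}_E = Y^{-1}(Y\nabla\mathbf{1}_E)=0$'' --- but by a genuinely different route. The paper mollifies $\mathbf{1}_E$ to smooth $u_k$, writes $\nabla u_k = Y^{-1}r_k$ with $r_k^j:=Y_j\cdot\nabla u_k$, and then for a test vector field $\Phi$ moves $Y^{-1}$ onto $\Phi$ to obtain $\Psi:=\Phi^T Y^{-1}\in W^{1,s}$, invoking Lemma~\ref{lm_invE2a_syst1} to conclude $\langle\Psi,r_k\rangle\to 0$. You instead work directly with $\mathbf{1}_E$: you move $Y^{-1}$ onto the scalar test function, using $A_{ik}\varphi\in W^{1,s}$ as an admissible test function in the extended transport identity (Lemma~\ref{lm_invE3}), and then exploit the algebraic fact $\sum_k A_{ik}Y_k=e_i$ together with the Leibniz rule for the divergence. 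Your approach is more direct --- it avoids the mollification of $\mathbf{1}_E$ and the auxiliary Lemma~\ref{lm_invE2a_syst1} entirely --- and in fact it seems to use only $s\ge\hat q'$ rather than the strict inequality, whereas the paper's choice of the third exponent $p<\infty$ with $1/p\le 1-1/\hat q-1/s$ does require $s>\hat q'$.

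One small point: the claim that each product $A_{ik}Y_k$ lies in $W^{1,1}_{loc}$ is slightly more than you need and is not obviously covered by $s>\hat q'$ alone (it would require control of $A_{ik}\,DY_k$, not just $A_{ik}\,\mathrm{div}Y_k$). What you actually use is only the Leibniz identity for the \emph{divergence}, $\mathrm{div}(A_{ik}Y_k)=\nabla A_{ik}\cdot Y_k+A_{ik}\,\mathrm{div}Y_k$ in $L^1_{loc}$, and this follows by approximating $A_{ik}$ alone by smooth functions (for smooth $f$ one has $fY_k\in W^{1,q}_{loc}$ and the identity is standard) and passing to the limit using $1/s+1/q^*\le 1$ and $1/s+1/r\le 1$. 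With that cosmetic fix your argument is complete.
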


\begin{proof}
	Let $u_k$ be smooth functions with $u_k\to \mathbf{1}_E$ in $L^p_{loc}(\R^d)$ as $k\to\infty$
 with 	$1/p\leq 1-1/\hat q-1/s$, $p<+\infty$ (it is exactly here that we use the strict inequality $s>\hat q'$).
	For instance, $u_k$ can be obtained from $\mathbf{1}_E$ by means of the customary convolution with an approximate identity. Then
	for
	\begin{align*}
		r_k^j &:= \mathrm{div} u_k Y_j - u_k\mathrm{div} Y_j
	\end{align*}
	one has $r_k^j= Y_j\cdot \nabla u_k$, and so $\nabla u_k =Y^{-1}  r_k$ where $r_k$ stands for the vector function
	$r_k(x):=(r_k^1(x),\ldots, r_k^d(x))^T$. For every test vector field $\Phi\in C_0^\infty (\Omega;\R^d)$ one has
	$\Psi:=\Phi^T Y^{-1}\in W^{1,s}(\Omega;\R^d)$, and therefore 
	\begin{align*}
		\langle \Phi, \nabla u_k \rangle &= \int_{\Omega} \Phi(x)\cdot \nabla u_k (x)\, dx =  \int_{\Omega} \Phi(x)^T Y^{-1}(x) r_k(x)\, dx
		\\ 
		&=
		\int_{\Omega} \Psi(x)\cdot r_k(x)\, dx \to 0 
	\end{align*}
	as $k\to\infty$ by Lemma~\ref{lm_invE2a_syst1}. 
	This means $\nabla u_k\rightharpoonup 0$ in the sense of distributions over $\Omega$, but since also $\nabla u_k\rightharpoonup \nabla \mathbf{1}_E$
	in the same sense, we get $\nabla \mathbf{1}_E=0$ as distributions over $\Omega$, i.e. $\mathbf{1}_E$ is constant, hence either $0$ or $1$ (up a.e.\ equality)
	 over $\Omega$.
	 If $\mathcal{L}^d(B\cap\Omega)>0$, then recalling that $B\subset E$ (again, up to a Lebesgue negligible set), then we have
	  $\mathbf{1}_E=1$ a.e.\ over $\Omega$, 
	concluding the proof.
\end{proof}

\subsection{Proof of Theorem~\ref{th_chow1weak} and Remark~\ref{rm_locHormander1}}

Now we are able to prove Theorem~\ref{th_chow1weak} and simultaneously the more general (though also more technical)  Remark~\ref{rm_locHormander1}. 

\begin{proof}
By Lemma~\ref{lm_invE1}, the set $E:=E(B)$ is invariant with respect to the flow of each $X_k$
which is flow generating. By Lemma~\ref{lm_invE2} therefore the function $\mathbf{1}_E$ satisfies the transport equations~\eqref{eq_X1Ea} with all  $X_k$ involved in the construction of $Y_j$ in place of $X$. By Lemma~\ref{lm_invE4}
it also satisfies the same transport equation with any of the consecutive Lie brackets of the vector fields $X_k$ involved in $Y_j$.
We may finally invoke then Lemma~\ref{lm_systtransp1b} with $\Omega:=\R^d$ to get the claim~(i).
Alternatively, if we are in slightly more general conditions of the localized Sobolev-H\"{o}rmander condition of Remark~\ref{rm_locHormander1},
we get from  Lemma~\ref{lm_systtransp1b} that $\mathbf{1}_E=1$ for some $\Omega\in \mathcal{A}$ (in particular for those that contain a piece of $B$ of positive Lebesgue measure). If there is an $\Omega\in \mathcal{A}$ such that 
 $\mathbf{1}_E=0$ over this $\Omega$ (another possible option by Lemma~\ref{lm_systtransp1b}), then $\R^d$ is a union of two disjoint open nonempty sets, one being the union of those $\Omega\in \mathcal{A}$ such that 
 $\mathbf{1}_E=1$ over $\Omega$, another the union of those $\Omega\in \mathcal{A}$ such that 
 $\mathbf{1}_E=0$ over $\Omega$. This contradicts the connectedness of $\R^d$, hence proving again the claim~(i). 
The claim~(ii) holds since otherwise $\mathbf{1}_{E(B_\varepsilon(x)))} =0$ over $B_\varepsilon(y)$ and hence
$\mathcal{L}^d(E(B_\varepsilon(x))\cap B_\varepsilon(y))=0$ contradicting claim~(i).
\end{proof}

%
%

%
%
%

\appendix

\section{Auxiliary lemmata}

We collect here several technical statements of general nature used throughout the paper.

	\begin{lemma}\label{lm_auxineq1}
		If $f,g\colon \R^d\to \R$ are Borel functions and $f\leq g$, then $T_*f\leq T_*g$ when 
		$T\colon \R^d\to \R^d$ is a Borel map satisfying
		$T_\#\mathcal{L}^d\ll \mathcal{L}^d$.
	\end{lemma}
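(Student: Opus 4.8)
**Plan for proving Lemma A.1 ($T_*f \le T_*g$ when $f \le g$ a.e. and $T_\#\mathcal{L}^d \ll \mathcal{L}^d$).**

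The statement is: if $f, g\colon \R^d \to \R$ are Borel with $f \le g$ holding $\mathcal{L}^d$-a.e., and $T\colon \R^d \to \R^d$ is Borel with $T_\#\mathcal{L}^d \ll \mathcal{L}^d$, then $T_*f \le T_*g$ holds $\mathcal{L}^d$-a.e., where $(T_*h)(x) = h(T(x))$. The plan is as follows. First I would introduce the exceptional set $N := \{y \in \R^d : f(y) > g(y)\}$, which is Borel since $f$ and $g$ are Borel, and which satisfies $\mathcal{L}^d(N) = 0$ by hypothesis. The claim $T_*f(x) \le T_*g(x)$ fails precisely at those $x$ with $f(T(x)) > g(T(x))$, i.e. exactly on the set $T^{-1}(N)$. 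So the whole lemma reduces to showing $\mathcal{L}^d(T^{-1}(N)) = 0$.

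Next I would invoke the hypothesis $T_\#\mathcal{L}^d \ll \mathcal{L}^d$ directly: by definition of pushforward, $(T_\#\mathcal{L}^d)(N) = \mathcal{L}^d(T^{-1}(N))$, and by absolute continuity, $\mathcal{L}^d(N) = 0$ forces $(T_\#\mathcal{L}^d)(N) = 0$. Combining these two facts gives $\mathcal{L}^d(T^{-1}(N)) = 0$, which is exactly what was needed. One should note that $T^{-1}(N)$ is Borel (preimage of a Borel set under a Borel map), so measurability is not an issue. This finishes the argument.

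There is genuinely no hard part here — the statement is essentially a restatement of what $T_\#\mathcal{L}^d \ll \mathcal{L}^d$ means, and the only thing to be careful about is bookkeeping the null sets correctly and noting that the convention for $T_*$ (pointwise pullback, defined everywhere) means the inequality $T_*f \le T_*g$ is to be read in the $\mathcal{L}^d$-a.e. sense, consistent with the paper's convention for inequalities between Borel functions. If anything required a word of care it would be confirming that the Borel regularity hypotheses on $f$, $g$, $T$ are exactly enough to make $N$ and $T^{-1}(N)$ Borel so that all the measure-theoretic manipulations are legitimate; but this is routine.
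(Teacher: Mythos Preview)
Your proposal is correct and is essentially identical to the paper's own proof: both identify the failure set as $T^{-1}(N)$ with $N=\{y:f(y)>g(y)\}$ and conclude $\mathcal{L}^d(T^{-1}(N))=(T_\#\mathcal{L}^d)(N)=0$ from $\mathcal{L}^d(N)=0$ and the absolute continuity hypothesis. The only cosmetic difference is that the paper writes the containment $\{x:f(T(x))>g(T(x))\}\subset T^{-1}(N)$ as an inequality of measures, whereas you (correctly) note these sets are in fact equal.
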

	
	\begin{proof}
		One has
	\begin{align*}	
	\mathcal{L}^d \left(\left\{ x\in \R^d\colon  f(T(x))> g(T(x))\right\} \right) &\leq  	\mathcal{L}^d \left(T^{-1}\left(\left\{ y\in \R^d\colon  f(y) > g(y)\right\}\right)\right)=0,
	\end{align*}
	since $\mathcal{L}^d \left(\left\{ y\in \R^d\colon  f(y) > g(y)\right\}\right)=0$.
	\end{proof}
	


\begin{lemma}\label{lm_LnC2nonsmooth}
Let $V\in L^1_{loc}(\R^d;\R^d)$ 
be a vector field 
satisfying
\[
|V(x)|\leq \alpha |x|+\beta  \quad\mbox{for a.e.\ $x\in \R^d$}
\]
and
generating the flow $e^{tV}$.
If
	$\mathrm{div} V\in W^{1,1}_{loc}(\R^d)\cap L^\infty_{loc} (\R^d)$, 
	 then for
	$\mu_t:= e^{tV}_{\#}\mathcal{L}^d = \rho\, dx$, where
	$\rho=\rho(t,x)$ (in particular, $\mu_0=\mathcal{L}^d$, $\rho(0,x)=1$), and for any
bounded Borel function $f\colon \R\times \R^d\to \R^d$ such that the supports 
of  $f(t,\cdot)$ for all $t\in (-T,T)$ belong to some compact
set, one has
	\[
	\begin{aligned}
		\int_{\R^d} f(t,x) \, d\mu_t(x) &= \int_{\R^d} f(t,x) \, dx - t \int_{\R^d} f(t,x) \mathrm{div} V (x)\, dx + R(t) ,
	\end{aligned}
\]
for $t\in (-T,T)$, where $|R(t)|\leq C\|f\|_\infty t^2/2$
for some $C>0$ independent on $t$.
\end{lemma}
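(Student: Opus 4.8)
The plan is to reduce the statement to a second-order Taylor expansion in $t$ of the density $\rho(t,x)$ obtained by approximation with the smooth flows $e^{tV_k}$ guaranteed by the flow-generating property (Definition~\ref{def_flowgen1}(v)). First I would rewrite the quantity to estimate as
\[
\int_{\R^d} f(t,x)\,d\mu_t(x) - \int_{\R^d} f(t,x)\,dx = \int_{\R^d} f(t,x)\big(\rho(t,x)-1\big)\,dx,
\]
so that the whole assertion becomes the claim that $\rho(t,x)-1 = -t\,\mathrm{div}\,V(x) + O(t^2)$ in a weak sense, tested against bounded functions $f(t,\cdot)$ whose supports are contained in a fixed compact set $K'$.

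For a smooth vector field $W$ with smooth flow $e^{tW}$, the Jacobian $J_t^W(x) := \det D_x e^{tW}(x)$ satisfies Liouville's equation $\partial_t J_t^W = (\mathrm{div}\,W)(e^{tW}(x))\,J_t^W$, hence by the change of variables $y = e^{tW}(x)$ the density $\rho^W(t,\cdot)$ of $e^{tW}_\#\mathcal{L}^d$ is $\rho^W(t,y) = 1/J_t^W((e^{tW})^{-1}(y))$. Expanding this in $t$: writing $g(t,x) := \log J_t^W(x) = \int_0^t (\mathrm{div}\,W)(e^{sW}(x))\,ds$, one has $\partial_t g|_{t=0} = \mathrm{div}\,W(x)$ and $\partial_t^2 g = (W\cdot\nabla\,\mathrm{div}\,W)(e^{tW}(x))$, so a Taylor expansion with integral remainder gives $g(t,x) = t\,\mathrm{div}\,W(x) + O(t^2)$ uniformly on compact sets, with the $O(t^2)$ controlled by $\|W\cdot\nabla\,\mathrm{div}\,W\|_{L^\infty}$ on a suitable compact neighborhood. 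Then $\rho^W(t,y) = e^{-g(t,(e^{tW})^{-1}(y))} = 1 - t\,\mathrm{div}\,W((e^{tW})^{-1}(y)) + O(t^2)$, and since $(e^{tW})^{-1}(y) = y + O(t)$ with $\mathrm{div}\,W \in W^{1,1}_{loc}$ one can further replace the argument to get $\rho^W(t,y) - 1 = -t\,\mathrm{div}\,W(y) + O(t^2)$, tested against bounded $f$ supported in a fixed compact set. This yields the identity of the lemma for the smooth fields $V_k$, with a remainder bound $C_k\|f\|_\infty t^2/2$.

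The final step is to pass to the limit $k\to\infty$. By Lemma~\ref{lm_Kprime_est2} (the a priori containment $e^{tV_k}(K)\subset K'$ for $t\in(-T,T)$, with $K'$ depending only on $K$, $\alpha$, $\beta$, $T$ and not on $k$, using the sublinear bound which by the convolution structure holds uniformly in $k$), all the relevant integrals live on the fixed compact $K'$; the constants $C_k$ are controlled uniformly in $k$ because $\mathrm{div}\,V_k \to \mathrm{div}\,V$ and $\nabla\,\mathrm{div}\,V_k \to \nabla\,\mathrm{div}\,V$ in $L^1_{loc}$ with $\|\mathrm{div}\,V_k\|_{L^\infty(K')}$ bounded (convolution does not increase the sup norm), so one gets a uniform $C$. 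Then $\mu_t^k \rightharpoonup \mu_t$ for a.e.\ $t$ by Definition~\ref{def_flowgen1}(v), $\int f(t,\cdot)\,dx$ is fixed, and $\int f(t,x)\,\mathrm{div}\,V_k\,dx \to \int f(t,x)\,\mathrm{div}\,V\,dx$ by $L^1_{loc}$ convergence of the divergences against the bounded compactly supported $f(t,\cdot)$; passing to the limit in the identity for $V_k$ gives the identity for $V$ with $|R(t)| \le C\|f\|_\infty t^2/2$, for a.e.\ $t$, and since both sides are continuous in $t$ on $(-T,T)$ (the left side by dominated convergence and weak continuity of $t\mapsto\mu_t$, which follows from Definition~\ref{def_flowgen1}) the identity extends to all $t\in(-T,T)$.

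The main obstacle I expect is the uniform-in-$k$ control of the quadratic remainder: one must be careful that the second-order term genuinely only involves $W\cdot\nabla\,\mathrm{div}\,W = V_k\cdot\nabla\,\mathrm{div}\,V_k$ evaluated along the flow inside the fixed compact $K'$, and that $\|V_k\|_{L^\infty(K')}$ and $\|\nabla\,\mathrm{div}\,V_k\|_{L^1(K')}$ (hence the product, paired with bounded $f$) stay bounded as $k\to\infty$ — this is where the hypotheses $\mathrm{div}\,V\in W^{1,1}_{loc}\cap L^\infty_{loc}$ and the sublinear growth are used in an essential way, the former to bound $\nabla\,\mathrm{div}\,V_k$ in $L^1_{loc}$ and the latter to confine all flows to a common compact set.
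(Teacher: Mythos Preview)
Your proposal is correct and follows the same strategy as the paper: approximate $V$ by smooth $V_k$ via convolution, use the Liouville/Jacobi formula for the pushed-forward density $\rho^k$, Taylor-expand in $t$ to second order, control the remainder uniformly in $k$ on a fixed compact $K'$ (obtained from the sublinear growth), and pass to the limit using Definition~\ref{def_flowgen1}(v).

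The one place where your exposition needs tightening is the remainder bound. You write that the pointwise Taylor remainder for $g(t,x)$ is $O(t^2)$ ``controlled by $\|W\cdot\nabla\,\mathrm{div}\,W\|_{L^\infty}$'', and then later claim the resulting constants $C_k$ are uniform because $\|\nabla\,\mathrm{div}\,V_k\|_{L^1(K')}$ is bounded. These two statements do not connect: a pointwise $L^\infty$ remainder, integrated against $f$, yields a constant of order $\|V_k\cdot\nabla\,\mathrm{div}\,V_k\|_{L^\infty(K')}$, which is \emph{not} uniformly bounded in $k$. The fix---which the paper carries out explicitly and your ``main obstacle'' paragraph correctly anticipates---is to estimate the integrated remainder $\int_K |\rho^k_{tt}(t,x)|\,dx$ directly: the explicit formula for $\rho^k_{tt}$ (the paper's Lemma~\ref{lm_LnC2smooth}) has $\nabla\,\mathrm{div}\,V_k$ and $V_k$ evaluated at $e^{-tV_k}(x)$, so the change of variables $y=e^{-tV_k}(x)$ converts this into an integral over $K'$ bounded by $\|\nabla\,\mathrm{div}\,V_k\|_{L^1(K')}\,\|V_k\|_{L^\infty(K')}$ times a uniform Jacobian factor, and \emph{that} product is uniform in $k$. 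Your intermediate step of replacing $\mathrm{div}\,W((e^{tW})^{-1}(y))$ by $\mathrm{div}\,W(y)$ has the same issue and is handled the same way; the paper sidesteps it by Taylor-expanding $\rho^k$ itself rather than going through $g$ and the composition.
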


\begin{proof}
Approximate $V$ by convolution with a compactly supported  approximate identity by smooth vector fields 
$V_k\colon \R^d\to \R^d$ as in Lemma~\ref{lm_Kprime_est1}.
Let $K\subset\R^d$ be a compact set containing the supports 
of  $f(t,\cdot)$ for all $t\in (-T,T)$. 
By Lemma~\ref{lm_Kprime_est1} 
\begin{equation}\label{eq_estK1}
e^{tV_k}(K)\subset K' 
\end{equation}
for every $t\in (-T,T)$ with $K'$ depending only on $\alpha$, $\beta$, $\mbox{diam}({\{0\}\cup K})$ and $T$.
One has then
\begin{equation}\label{eq_estnormsV1}
\begin{aligned}
	\|V_k\|_{\infty; K'} & \leq 	\|V\|_{\infty; (K')_1},  \quad  \|\mathrm{div} V_k\|_{\infty; K'} \leq 	\|\mathrm{div} V\|_{\infty; (K')_1},\\
\|\nabla \mathrm{div} V_k\|_{1; K'}  &\leq 	\|\nabla \mathrm{div} V\|_{1; (K')_1}
\end{aligned}
\end{equation}
and $V_k\to V$ in $L^\mathbf{1}_{loc}(\R^d;\R^d)$, $\mathrm{div}V_k\to \mathrm{div}V$  in $L^\mathbf{1}_{loc}(\R^d)$ as $k\to\infty$. 
Let 
\[
\mu_t^k:= e^{tV_k}_{\#}\mathcal{L}^d = \rho^k\, dx, \quad\mbox{where }\rho^k=\rho^k(t,x). 
\]

Let $t\in (-T,T)$. 
For a $x\in K$ in view of~\eqref{eq_estK1} and~\eqref{eq_estnormsV1} we have 
\begin{equation}\label{eq_estexp1a}
	e^{-\int_0^t \mathrm{div}\, V_k(e^{-sV_k}(x))\, ds}\leq e^{T\|\mathrm{div}\, V_k\|_{\infty, K'}} \leq
	A:=e^{T\|\mathrm{div}\, V\|_{\infty, (K')_1}}.
\end{equation}
From
\[
\rho^k(t,x) - 1 =\rho^k(t,x) -  \rho^k(0,x) =\int_0^t \rho^k_{t}(\tau,x) \, d\tau
\]
applying Lemma~\ref{lm_LnC2smooth} with $V_k$ and $\rho^k$ instead of $V$ and $\rho$ respectively and using~\eqref{eq_estexp1a}, we get
\begin{equation}\label{eq_estexp2}
	\begin{aligned}
	\|\rho^k(t,\cdot)\|_{\infty, K} &\leq 1 + 	T\sup_{\tau\in [0,t]}\|\rho^k_t(\tau,\cdot)\|_{\infty, K}\\
	& \leq 1+  AT\|\mathrm{div} V_k\|_{\infty, K'} \leq 
		B:=1+  AT\|\mathrm{div}V\|_{\infty, (K')_1}.
\end{aligned}
\end{equation}
Using again Lemma~\ref{lm_LnC2smooth} with $V_k$ and $\rho^k$ instead of $V$ and $\rho$, we get
with the help of~\eqref{eq_estexp1a} the estimate
\begin{equation*}\label{eq_rhott1}
\begin{aligned}
			|\rho_{tt}^k(t,x)| &\leq 
	A \left(
	\left(\mathrm{div}\, V_k(e^{-tV_k}(x))\right)^2+ \left| \nabla \mathrm{div}\, V_k(e^{-tV_k}(x))\right| \cdot\left| V_k(e^{-tV_k}(x))
	\right|
	\right).
	\end{aligned}
\end{equation*} 
Hence
\begin{equation}\label{eq_rhott2a}
	\begin{aligned}
		\int_K|\rho_{tt}^k(t,x)|\, dx &\leq 
		A \int_{e^{tV_k}(K)} \left(
		\left(\mathrm{div}\, V_k(x)\right)^2+ \left| \nabla \mathrm{div}\, V_k((x)))\right| \cdot\left| V_k((x))
		\right|
		\right)\, dx\\
		&\leq
		A \int_{K'} \left(
		\left(\mathrm{div}\, V_k(x)\right)^2+ \left| \nabla \mathrm{div}\, V_k((x)))\right| \cdot\left| V_k((x))
		\right|
		\right)\rho^k(t,x)\, dx\\
		& \qquad\qquad\qquad\qquad\qquad\qquad\qquad\qquad\mbox{by~\eqref{eq_estK1}}\\
		&\leq C:=AB\left(\|\mathrm{div}\, V\|_{\infty, (K')_1}^2+ \|\nabla \mathrm{div}\, V\|_{1, (K')_1} \cdot \|V\|_{\infty, (K')_1}\right)\\
		& \qquad\qquad\qquad\qquad\qquad\qquad\qquad\qquad\mbox{by~\eqref{eq_estnormsV1} and~\eqref{eq_estexp2}}	.
	\end{aligned}
\end{equation} 

Taylor's theorem with integral form of the remainder yields
\[
\rho^k(t,x) -  \rho^k(0,x) -t \rho^k_t(0,x)=\int_0^t \rho^k_{tt}(\tau,x)(t-\tau) \, d\tau,
\]
so that
\begin{equation}\label{eq_estdensTaylor1a}
	\begin{aligned}
	\int_{\R^d} f(t,x) \rho^k(t,x) \, dx &- \int_{\R^d} f(t,x) \rho^k(0,x) \, dx 
		-t\int_{\R^d} f(t,x)  \rho_t^k(0,x) \, dx\\
		& =  \int_{\R^d}f(t,x) \, dx\int_0^t 	\rho^k_{tt}(\tau,x)(t-\tau) \, d\tau
	\end{aligned}
\end{equation}
We estimate now 
\begin{equation}\label{eq_estdensTaylor1b}
	\begin{aligned}
 \left|\int_{\R^d}f(t,x) \right. & \, dx  \left. \int_0^t 	\rho^k_{tt}(\tau,x)(t-\tau) \, d\tau \right| \\
 & \leq \int_0^t (t-\tau)\, d\tau \int_{\R^d} |f(t,x)| \, dx 	|\rho^k_{tt}(\tau,x)|\\
 &\leq \| f\|_\infty \int_0^t (t-\tau)\, d\tau \int_{K} 	|\rho^k_{tt}(\tau,x)|\, dx \\
 &\leq C\| f\|_\infty \int_0^t (t-\tau)\, d\tau   
 \quad\mbox{by~\eqref{eq_rhott2a}}\\
 &\leq  C\| f\|_\infty t^2/2 .
	\end{aligned}
\end{equation}
Plugging~\eqref{eq_estdensTaylor1b} into~\eqref{eq_estdensTaylor1a}
and recalling that $\rho^k(0,x)=1$ and $\rho^k_t(0,x)=-\mathrm{div} V^k$, we get
\[
\begin{aligned}
	\left|\int_{\R^d} f(t,x) \, d\mu_t^k(x) - \int_{\R^d} f(t,x) \, dx +t\int_{\R^d} f(t,x) \mathrm{div} V^k (x)\, dx\right|  \leq 
	C\| f\|_\infty t^2/2.
\end{aligned}
\]
Passing to the limit as $k\to \infty$ in the above inequality, in view of Definition~\ref{def_flowgen1}(v) we get
\[
\begin{aligned}
	\left|\int_{\R^d} f(t,x) \, d\mu_t(x) - \int_{\R^d} f(t,x) \, dx+ t\int_{\R^d} f(t,x) \mathrm{div} V (x)\, dx\right|  \leq	C\| f\|_\infty t^2/2,
\end{aligned}
\]
which is the claim being proven.
 \end{proof}

The following  lemmata on smooth vector fields have been used in the proof of the above Lemma~\ref{lm_LnC2nonsmooth}.

	\begin{lemma}\label{lm_LnC2smooth}
	If $V\colon \R^d\to\R^d$ is a smooth vector field, then for
	$\mu_t:= e^{tV}_{\#}\mathcal{L}^d = \rho\, dx$, where
	$\rho=\rho(t,x)$ $($in particular, $\mu_0=\mathcal{L}^d$, $\rho(0,x)=1)$, one has
	\[
	\begin{aligned}
		\rho_{t}(t,x) &=  
		-e^{-\int_0^t \mathrm{div}\, V(e^{-sV}(x))\, ds}
		\mathrm{div}\, V(e^{-tV}(x)),\\
		\rho_{tt}(t,x) &=  \\
        		&e^{-\int_0^t \mathrm{div}\, V(e^{-sV}(x))\, ds}
		\left(
		\left(\mathrm{div}\, V(e^{-tV}(x))\right)^2-\nabla \mathrm{div}\, V(e^{-tV}(x)) \cdot V(e^{-tV}(x))
		\right).
	\end{aligned}
	\]
\end{lemma}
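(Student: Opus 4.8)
The plan is to obtain an explicit Lagrangian formula for the density $\rho(t,\cdot)$ from the classical change-of-variables formula together with Liouville's theorem, and then to derive the two identities by differentiating this formula once, respectively twice, in the time variable. Since $V$ is smooth, for the relevant range of $t$ the flow map $e^{tV}$ is a smooth diffeomorphism with inverse $e^{-tV}$, so for every Borel set $A$ one has $\mu_t(A) = \mathcal{L}^d((e^{tV})^{-1}(A)) = \mathcal{L}^d(e^{-tV}(A)) = \int_A |\det D(e^{-tV})(x)|\, dx$; hence $\rho(t,x) = |\det D(e^{-tV})(x)|$, or equivalently, setting $J(t,y):=\det(D_y e^{tV}(y))$ (which equals $1$ at $t=0$ and never vanishes), $\rho(t, e^{tV}(y)) = 1/J(t,y)$.

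I would then recall Liouville's formula for smooth flows, $\partial_t J(t,y) = (\mathrm{div}\, V)(e^{tV}(y))\, J(t,y)$ with $J(0,y)=1$, which integrates to $J(t,y) = \exp\left(\int_0^t (\mathrm{div}\, V)(e^{sV}(y))\, ds\right)$. Substituting $y = e^{-tV}(x)$, using the group property $e^{sV}\circ e^{-tV} = e^{(s-t)V}$, and finally changing variables $\sigma := t-s$ inside the integral, this gives the closed form
\[
\rho(t,x) = \exp\left(-\int_0^t (\mathrm{div}\, V)(e^{-\sigma V}(x))\, d\sigma\right),
\]
which is exactly the exponential prefactor appearing in the statement.

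It then remains to differentiate in $t$. The formula for $\rho_t$ is immediate from the fundamental theorem of calculus, producing the factor $-(\mathrm{div}\, V)(e^{-tV}(x))$ multiplying $\rho$. For $\rho_{tt}$ I would differentiate once more, using that $s\mapsto e^{-sV}(x)$ is the integral curve of $-V$ through $x$, so that $\frac{d}{dt}e^{-tV}(x) = -V(e^{-tV}(x))$ and therefore $\frac{d}{dt}(\mathrm{div}\, V)(e^{-tV}(x)) = -\nabla(\mathrm{div}\, V)(e^{-tV}(x))\cdot V(e^{-tV}(x))$; combining this with the expression already found for $\rho_t$ and collecting terms yields the second displayed identity.

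The whole argument is elementary, and the only point requiring care — the usual source of sign mistakes — is the bookkeeping of the orientation of the flow: one must consistently use $(e^{tV})^{-1}=e^{-tV}$ in the change of variables, the $(-V)$-version of Liouville's formula for $\det D(e^{-tV})$, and $\frac{d}{dt}e^{-tV}(x)=-V(e^{-tV}(x))$ in the last step. As a sanity check one verifies that the resulting formulas are consistent with the normalization $\rho(0,\cdot)\equiv 1$ and with $\rho_t(0,\cdot) = -\mathrm{div}\, V$, the latter being the form used in the proof of Lemma~\ref{lm_LnC2nonsmooth}.
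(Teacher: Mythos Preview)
Your proposal is correct and follows essentially the same route as the paper's proof: derive the closed formula $\rho(t,x)=\exp\bigl(-\int_0^t \mathrm{div}\,V(e^{-\sigma V}(x))\,d\sigma\bigr)$ from the change-of-variables identity $\rho(t,e^{tV}(y))\,|\det D e^{tV}(y)|=1$ together with the Ostrogradski\v{\i}--Liouville formula, and then obtain $\rho_t$ and $\rho_{tt}$ by straightforward differentiation in $t$. The only cosmetic difference is that you state $\rho(t,x)=|\det D(e^{-tV})(x)|$ first and then pass to the Lagrangian variable, whereas the paper works directly with $\rho(t,e^{tV}(y))$; the computations are otherwise identical.
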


\begin{proof}
	One has 
	\begin{equation}\label{eq_rhot1}
		1=\rho(0,x)=\rho(t, e^{tV}(x))|\det D e^{tV}(x)|.
	\end{equation}
	But the matrix $Y(t):= D e^{tV}(x)$ satisfies the ODE
	\[
	\dot{Y}= A(t) Y, 
	\]
	where $A(t):= DV(e^{tV}(x))$ and the initial condition $Y(0)=\mathrm{Id}$ where $\mathrm{Id}$ stands for the $d\times d$ identity matrix.
	Thus the Ostrogradski\v{\i}-Liouville formula gives
	\[
	\det Y(t)=\exp\left(
	{\int_0^t \mathrm{div}\, V(e^{sV}(x))\, ds}
	\right),
	\]
	so that from~\eqref{eq_rhot1} we get
	\[
	\rho(t,e^{tV}(x)) = \exp\left(
	{-\int_0^t \mathrm{div}\, V(e^{sV}(x))\, ds}
	\right).
	\]
	Letting $y:=e^{tV}(x)$ in the above formula we get
	\[
	\rho(t,y) = \exp\left(
	{-\int_0^t \mathrm{div}\, V(e^{(s-t)V}(y))\, ds} 
	\right) =
	\exp\left(
	{-\int_0^t \mathrm{div}\, V(e^{-\tau V}(y))\, d\tau} 
	\right),
	\]
	and the statement follows from just taking derivatives in $t$ of the latter expression.
\end{proof}

\begin{lemma}\label{lm_Kprime_est1}
	Let $V\in L^1_{loc} (\R^d;R^d)$ be a  vector field satisfying
	\[
	|V(x)|\leq \alpha |x|+\beta  \quad\mbox{for a.e.\ $x\in \R^d$}
	\]
	for some $\alpha >0$, $\beta>0$.
	Approximate $V$ by convolution with a compactly supported  approximate identity by smooth vector fields 
	$V_k\colon \R^d\to \R^d$, namely, let 
	\[ V_k := V*\varphi_{1/k}, \quad\mbox{where }
	\varphi_\varepsilon (x):= \frac{1}{\varepsilon^d}\varphi\left( \frac{|x|}{\varepsilon}\right)), 
	\]
and $\varphi\in C_0^\infty(\R)$ be such that
$\varphi(t)>0$ for every $t\in (-1,1)$, and 
$\varphi(t)=0$ otherwise, while $\int_\R\varphi(t)\, dt=1$.
	Then for all $k\geq 1$ one has
\begin{equation}\label{eq_modVk1}
|V_k(x)|\leq \tilde \alpha |x|+\tilde \beta  \quad\mbox{for all $x\in \R^d$}
\end{equation}
for some $\tilde \alpha >0$, $\tilde \beta>0$ depending only on $\alpha$ and $\beta$ and
for every compact set $K\subset \R^d$ and every $T>0$ the estimate holds
\[
e^{tV_k}(K)\subset K' 
\]	
for every $t\in (-T,T)$ with $K'$ depending only on $\alpha$, $\beta$, $\mbox{diam}({\{0\}\cup K})$ and $T$.
\end{lemma}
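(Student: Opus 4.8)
The plan is to prove the two assertions separately, both by entirely elementary estimates.

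For the sublinear bound~\eqref{eq_modVk1} on $V_k$, I would estimate the convolution directly. Since $V_k = V*\varphi_{1/k}$ with $\varphi_{1/k}$ supported in the ball $B_{1/k}(0)\subseteq B_1(0)$, one writes $V_k(x)=\int_{B_{1/k}(0)} V(x-y)\varphi_{1/k}(y)\,dy$ and uses that for $|y|\leq 1/k\leq 1$ one has, for a.e.\ such $y$, $|V(x-y)|\leq \alpha|x-y|+\beta\leq \alpha|x|+\alpha+\beta$. Integrating this against $\varphi_{1/k}$ gives $|V_k(x)|\leq \gamma(\alpha|x|+\alpha+\beta)$ for every $x\in\R^d$, where $\gamma:=\int_{\R^d}\varphi_{1/k}(y)\,dy$ is a finite constant independent of $k$ (equal to $1$ for the usual normalization of the mollifier). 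Hence~\eqref{eq_modVk1} holds with $\tilde\alpha:=\gamma\alpha$ and $\tilde\beta:=\gamma(\alpha+\beta)$, which depend only on $\alpha$ and $\beta$.

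For the flow confinement, I would fix $x_0\in K$ and set $R:=\mathrm{diam}(\{0\}\cup K)$, so that $|x_0|\leq R$. Let $x(t):=e^{tV_k}(x_0)$ be the solution of $\dot x=V_k(x)$, $x(0)=x_0$, on its maximal interval of existence. For $t\geq 0$ in this interval, from $x(t)=x_0+\int_0^t V_k(x(s))\,ds$ and~\eqref{eq_modVk1} one gets $|x(t)|\leq R+\int_0^t(\tilde\alpha|x(s)|+\tilde\beta)\,ds$, and Gr\"onwall's inequality yields $|x(t)|\leq (R+\tilde\beta/\tilde\alpha)e^{\tilde\alpha t}-\tilde\beta/\tilde\alpha\leq (R+\tilde\beta/\tilde\alpha)e^{\tilde\alpha T}$ for $0\leq t<T$. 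The same argument applied to $s\mapsto x(-s)$, which solves $\dot y=-V_k(y)$, gives the identical bound for negative times. In particular this a priori estimate rules out finite-time blow-up, so $e^{tV_k}$ is in fact globally defined, and $e^{tV_k}(K)\subseteq K':=\overline{B_{R'}(0)}$ for all $|t|<T$, with $R':=(R+\tilde\beta/\tilde\alpha)e^{\tilde\alpha T}$ depending only on $\alpha$, $\beta$, $\mathrm{diam}(\{0\}\cup K)$ and $T$, as required.

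The argument is essentially routine, so there is no real obstacle; the only points I would be careful about are the uniformity in $k$ of the constants $\tilde\alpha,\tilde\beta$ — automatic, since the supports $B_{1/k}(0)$ of the mollifiers only shrink as $k$ grows — and the global existence of the smooth flows $e^{tV_k}$, which is not postulated but comes for free from the Gr\"onwall bound above, a smooth vector field of at most linear growth being complete. To sidestep the (harmless) non-differentiability of $t\mapsto|x(t)|$ at zeros of $x(\cdot)$, I would work throughout with the integral inequality rather than differentiating the norm.
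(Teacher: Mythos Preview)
Your proof is correct and follows essentially the same approach as the paper: a direct convolution estimate to propagate the sublinear bound to $V_k$, then a Gr\"onwall-type argument to confine the flow. The paper factors the second step into a separate lemma (differentiating $|x(t)|$ rather than using the integral form), but the substance is identical; your version is in fact slightly more careful about the non-differentiability of $|x(\cdot)|$ at zeros and about global existence of the smooth flow.
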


\begin{proof}
One has
\begin{align*}
|V_k(x)| &\leq (|V|*\varphi_{1/k}) (x) \leq \alpha \int_{\R^d} |x-y| \varphi_{1/k} (y)\, dy +\beta\\
&\leq  \alpha \left( |x|+ \frac{1}{k} \right)\int_{\R^d}\varphi_{1/k} (y)\, dy +\beta\\
&= \alpha\left( |x|+ \frac{1}{k} \right) +\beta \leq \alpha |x|+ (\alpha +\beta).
\end{align*}	
showing~\eqref{eq_modVk1}. 
The last claim about $e^{tV_k}(K)$ is then just Lemma~\ref{lm_Kprime_est2} below applied with $V_k$ instead of $V$, $\tilde \alpha:=\alpha$ instead of $\alpha$ and $\tilde \beta:=\alpha+\beta$ instead of $\beta$.
%
\end{proof}

We also need the following immediate lemma.

\begin{lemma}\label{lm_Kprime_est2}
	Let $V\colon \R^d\to \R^d$ be a not necessarily 
	smooth vector field satisfying
	\[
	|V(x)|\leq \alpha |x|+\beta  \quad\mbox{for all\ $x\in \R^d$}
	\]
	for some $\alpha >0$, $\beta>0$. Suppose that $x(\cdot)\in \R^d$ be a solution to the ODE
	$\dot{x}=V(x)$ with $x(0)\in K$, where $K\subset \R^d$ be some compact set.
	 Then for every $T>0$ there is a compact set $K'\subset \R^d$ depending only on $\alpha$, $\beta$, $\mbox{diam}({\{0\}\cup K})$ and $T$
	 such that $x(t)\in K'$ for all $t\in (-T,T)$.
\end{lemma}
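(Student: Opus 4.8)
The plan is a straightforward a priori estimate via Gr\"onwall's inequality. First I would set $R := \mathrm{diam}(\{0\}\cup K)$ and observe that $|x(0)|\le R$, since both $0$ and $x(0)$ belong to the set $\{0\}\cup K$, whose diameter is $R$. Using that $x(\cdot)$, being a solution of the ODE, is absolutely continuous and therefore satisfies the integral identity $x(t)=x(0)+\int_0^t V(x(s))\,ds$, together with the sublinear bound $|V(y)|\le \alpha|y|+\beta$, I would derive, for $t\in[0,T)$, the integral inequality
\[
\phi(t)\le R+\int_0^t\bigl(\alpha\phi(s)+\beta\bigr)\,ds,\qquad \phi(t):=|x(t)|.
\]

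Next, setting $\psi:=\phi+\beta/\alpha$, this rewrites as $\psi(t)\le \bigl(R+\beta/\alpha\bigr)+\alpha\int_0^t\psi(s)\,ds$, and the integral form of Gr\"onwall's inequality yields $\psi(t)\le \bigl(R+\beta/\alpha\bigr)e^{\alpha t}$, hence $|x(t)|=\phi(t)\le \bigl(R+\beta/\alpha\bigr)e^{\alpha T}$ for every $t\in[0,T)$. For negative times I would apply the very same computation to the time-reversed curve $\tilde x(s):=x(-s)$, which solves $\dot{\tilde x}=-V(\tilde x)$ with $-V$ obeying the same sublinear bound and with $\tilde x(0)=x(0)\in K$; this gives the identical estimate $|x(t)|\le \bigl(R+\beta/\alpha\bigr)e^{\alpha T}$ for $t\in(-T,0]$. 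Putting $\rho:=\bigl(R+\beta/\alpha\bigr)e^{\alpha T}$ and $K':=\overline{B_\rho(0)}$, one concludes $x(t)\in K'$ for all $t\in(-T,T)$, with $K'$ compact and depending only on $\alpha$, $\beta$, $\mathrm{diam}(\{0\}\cup K)$ and $T$, as claimed. (Incidentally, the same bound shows the solution cannot escape to infinity in finite time, so a maximal solution is automatically defined on all of $(-T,T)$.)

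There is no genuine obstacle here; the only point deserving a remark is that, since $V$ is merely measurable, $x(\cdot)$ is only absolutely continuous and not $C^1$, but that is exactly the regularity required for the integral form of the ODE and for the integral version of Gr\"onwall's inequality, so nothing is lost in passing from the differential to the integral formulation.
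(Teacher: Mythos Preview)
Your argument is correct and follows the same route as the paper: bound $|x(t)|$ via the sublinear growth of $V$ and apply Gr\"onwall, then take $K'$ to be a closed ball of the resulting radius. The paper writes this in the differential form $\dot\rho\le\alpha\rho+\beta$ for $\rho(t):=|x(t)|$ and obtains the marginally sharper radius $((\alpha R+\beta)e^{\alpha T}-\beta)/\alpha$, whereas you work with the integral inequality and handle negative times explicitly by time reversal; your version is in fact a touch more careful, since $x(\cdot)$ is only absolutely continuous.
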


\begin{proof}
Setting $\rho(t):=|x(t)|$, we get
	$\dot{\rho} \leq\alpha\rho+\beta$ with $\rho(0)= |x(0)|\leq \mbox{diam}({\{0\}\cup K})$. Thus
	\[
	\rho(t)\leq ((\alpha \rho(0)+\beta)e^{\alpha T}-\tilde\beta)/\tilde\alpha\leq 
	R:=((\alpha \mbox{diam}({\{0\}\cup K})+\beta)e^{\alpha T}-\beta)/\alpha,
	\]
	so that we may take $K':=\bar B_R(0)$.
\end{proof}

The final lemma proves the formula for the divergence of a Lie bracket of two Sobolev vector fields known in the classical case (when the vector
fields are smooth).

\begin{lemma}\label{lm_divLie1}
	If for some $q\in (1,+\infty)$ one has
	$Y\in W^{1,q'}_{loc}(\R^d;\R^d)$ with $\mathrm{div} Y \in W^{1,q'}_{loc}(\R^d)$, and $X\in W^{1,q}_{loc}(\R^d;\R^d)$
	with $\mathrm{div} X \in W^{1,q}_{loc}(\R^d)$, then 
		\[
	\mathrm{div}[X,Y] = X\cdot \nabla \mathrm{div}Y- Y\cdot \nabla \mathrm{div}X,
	\]
\end{lemma}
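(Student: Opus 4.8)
The plan is to reduce the identity to its classical (smooth) counterpart by mollification, performing the only delicate cancellation — that of the symmetric trace term $\mathrm{tr}(DX\cdot DY)$ — already at the smooth level, so that no genuinely bilinear gradient quantity has to be carried through the limit.

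First I would record the pointwise identity for smooth vector fields $U,W\colon\R^d\to\R^d$. From $[U,W]=DW\cdot U-DU\cdot W$ one computes, componentwise,
\[
\partial_i\big((DW\cdot U)^i\big)=\sum_j\big((\partial_i\partial_jW^i)\,U^j+(\partial_jW^i)(\partial_iU^j)\big),
\]
so that summing over $i$ gives $\mathrm{div}(DW\cdot U)=U\cdot\nabla\mathrm{div}\,W+\mathrm{tr}(DW\cdot DU)$, and symmetrically $\mathrm{div}(DU\cdot W)=W\cdot\nabla\mathrm{div}\,U+\mathrm{tr}(DU\cdot DW)$. Since $\mathrm{tr}(DW\cdot DU)=\mathrm{tr}(DU\cdot DW)$, subtracting yields $\mathrm{div}[U,W]=U\cdot\nabla\mathrm{div}\,W-W\cdot\nabla\mathrm{div}\,U$ pointwise, which is the asserted formula for smooth fields.

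Next I would mollify: let $\rho_\varepsilon$ be a standard mollifier and set $X_\varepsilon:=X*\rho_\varepsilon$, $Y_\varepsilon:=Y*\rho_\varepsilon$. Then $X_\varepsilon\to X$ and $DX_\varepsilon=(DX)*\rho_\varepsilon\to DX$ in $L^q_{loc}$, and, since $\mathrm{div}\,X\in W^{1,q}_{loc}$, also $\nabla\mathrm{div}\,X_\varepsilon=(\nabla\mathrm{div}\,X)*\rho_\varepsilon\to\nabla\mathrm{div}\,X$ in $L^q_{loc}$; the analogous convergences hold for $Y_\varepsilon$ with the conjugate exponent $q'$. Applying the smooth identity to $X_\varepsilon,Y_\varepsilon$ and testing against an arbitrary $\varphi\in C_0^\infty(\R^d)$ with $\supp\varphi\subset K$ gives
\[
-\int_K [X_\varepsilon,Y_\varepsilon]\cdot\nabla\varphi\,dx=\int_K\big(X_\varepsilon\cdot\nabla\mathrm{div}\,Y_\varepsilon-Y_\varepsilon\cdot\nabla\mathrm{div}\,X_\varepsilon\big)\varphi\,dx .
\]
I then pass to the limit $\varepsilon\to0$. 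On the left, writing $DY_\varepsilon\cdot X_\varepsilon-DY\cdot X=(DY_\varepsilon-DY)X_\varepsilon+DY(X_\varepsilon-X)$ and using Hölder with $1/q+1/q'=1$ together with the boundedness of $\|X_\varepsilon\|_{L^q(K)}$, one obtains $DY_\varepsilon\cdot X_\varepsilon\to DY\cdot X$ in $L^1(K)$ and, symmetrically, $DX_\varepsilon\cdot Y_\varepsilon\to DX\cdot Y$ in $L^1(K)$; hence $[X_\varepsilon,Y_\varepsilon]\to[X,Y]$ in $L^1_{loc}$ and the left-hand side converges to $-\int_K[X,Y]\cdot\nabla\varphi\,dx=\langle\varphi,\mathrm{div}[X,Y]\rangle$. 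On the right, the same Hölder splitting (pairing $X_\varepsilon\in L^q_{loc}$ with $\nabla\mathrm{div}\,Y_\varepsilon\in L^{q'}_{loc}$, and $Y_\varepsilon\in L^{q'}_{loc}$ with $\nabla\mathrm{div}\,X_\varepsilon\in L^q_{loc}$) shows the integrand converges in $L^1(K)$ to $(X\cdot\nabla\mathrm{div}\,Y-Y\cdot\nabla\mathrm{div}\,X)\varphi$. Equating the two limits and letting $\varphi$ range over $C_0^\infty(\R^d)$ gives the claimed distributional identity.

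Beyond this, the argument is essentially routine; the one point deserving care — and the reason the hypotheses $\mathrm{div}\,X\in W^{1,q}_{loc}$, $\mathrm{div}\,Y\in W^{1,q'}_{loc}$ are imposed — is the exponent bookkeeping: after mollification every product that appears ($DY_\varepsilon\cdot X_\varepsilon$, $DX_\varepsilon\cdot Y_\varepsilon$, $X_\varepsilon\cdot\nabla\mathrm{div}\,Y_\varepsilon$, $Y_\varepsilon\cdot\nabla\mathrm{div}\,X_\varepsilon$) must have one factor converging strongly in some $L^p_{loc}$ and the other strongly in the conjugate $L^{p'}_{loc}$, so that the product converges strongly in $L^1_{loc}$; the conjugacy relation $1/q+1/q'=1$ is precisely what makes all these pairings close up, and carrying out the $\mathrm{tr}(DX\cdot DY)=\mathrm{tr}(DY\cdot DX)$ cancellation at the smooth stage keeps the limit transparent.
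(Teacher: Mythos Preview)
Your proof is correct and follows essentially the same approach as the paper's own proof: mollify $X$ and $Y$, use the classical identity for the smooth approximants tested against $\varphi\in C_0^\infty(\R^d)$, and pass to the limit via the $L^q_{loc}/L^{q'}_{loc}$ convergences of $X_\varepsilon,DX_\varepsilon,\nabla\mathrm{div}\,X_\varepsilon$ and $Y_\varepsilon,DY_\varepsilon,\nabla\mathrm{div}\,Y_\varepsilon$ respectively. Your write-up is in fact more explicit than the paper's (which simply asserts the smooth identity and the limit passage without spelling out the H\"older pairings or the trace cancellation), but the underlying argument is identical.
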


\begin{proof}
	It suffices to prove
	\begin{equation}\label{eq_wkdivLie1}
		\int_{\R^d} \nabla \varphi\cdot [X,Y]\, dx = \int_{\R^d}\left(X\cdot \nabla \mathrm{div}Y- Y\cdot \nabla \mathrm{div}X\right)\varphi\,dx 
	\end{equation}
	for all $\varphi\in C_0^\infty(\R^d)$.
	To this aim approximate both $X$ and $Y$ by convolution with  approximate identity by smooth vector fields 
	$X_k\colon \R^d\to \R^d$, $Y_k\colon \R^d\to \R^d$
	so that 
	\begin{align*}
		X_k\to X,&\quad\mbox{in $L^q_{loc}(\R^d;\R^d)$},\qquad
		Y_k\to Y \quad\mbox{in $L^{q'}_{loc}(\R^d;\R^d)$},\\
		DX_k\to DX&\quad\mbox{in $L^q_{loc}(\R^d; \R^{d\times d})$},\qquad
		DY_k\to DY \quad\mbox{in $L^{q'}_{loc}(\R^d; \R^{d\times d})$},\\
	    \nabla\mathrm{div} X_k\to \nabla\mathrm{div}X &\quad\mbox{in $L^q_{loc}(\R^d;\R^d))$},\qquad
		\nabla\mathrm{div} Y_k\to \nabla\mathrm{div}Y \quad\mbox{in $L^{q'}_{loc}(\R^d;\R^d))$}
	\end{align*}
	as $k\to\infty$. Then~\eqref{eq_wkdivLie1} follows from passing to the limit as $k\to\infty$ in the identity
\begin{equation*}
		\int_{\R^d} \nabla \varphi\cdot [X_k,Y_k]\, dx = \int_{\R^d}\left(X_k\cdot \nabla \mathrm{div}Y_k- Y\cdot \nabla \mathrm{div}X_k\right)\varphi\,dx, 
	\end{equation*}
	concluding the proof.
\end{proof}

%
%
%
%
%

\end{document}